\documentclass{article}

\usepackage[T1]{fontenc}
\usepackage{lmodern}

\usepackage[nocompress]{cite}
\usepackage[hidelinks]{hyperref}
\usepackage{amsmath}
\usepackage{mathtools}
\usepackage{amssymb}
\usepackage{amsthm}
\usepackage{stmaryrd}
\usepackage{xcolor}
\usepackage{booktabs}
\usepackage{graphicx}
\usepackage{placeins}
\usepackage{enumitem}
\usepackage{microtype}
\usepackage{fullpage}
\usepackage{authblk}
\usepackage{aliascnt}
\usepackage{tikz}
\usetikzlibrary{arrows.meta,positioning,shapes.geometric}

\AddToHook{env/figure/begin}{\setlength{\abovecaptionskip}{4pt}}


\newtheorem{theorem}{Theorem}[section]

\newaliascnt{proposition}{theorem}
\newtheorem{proposition}[proposition]{Proposition}
\aliascntresetthe{proposition}

\newaliascnt{lemma}{theorem}
\newtheorem{lemma}[lemma]{Lemma}
\aliascntresetthe{lemma}

\newaliascnt{corollary}{theorem}

\aliascntresetthe{corollary}

\newaliascnt{definition}{theorem}

\aliascntresetthe{definition}

\newaliascnt{example}{theorem}

\aliascntresetthe{example}

\newaliascnt{remark}{theorem}

\aliascntresetthe{remark}

\makeatletter
\def\@listI{\leftmargin\leftmargini
  \parsep 4\p@ \@plus2\p@ \@minus\p@
  \topsep 4\p@ \@plus2\p@ \@minus2\p@
  \itemsep4\p@ \@plus2\p@ \@minus\p@}
\let\@listi\@listI
\@listi
\makeatother

\usepackage[nameinlink]{cleveref}
\crefname{theorem}{Theorem}{Theorems}
\Crefname{theorem}{Theorem}{Theorems}
\crefname{proposition}{Proposition}{Propositions}
\Crefname{proposition}{Proposition}{Propositions}
\crefname{lemma}{Lemma}{Lemmas}
\Crefname{lemma}{Lemma}{Lemmas}
\crefname{corollary}{Corollary}{Corollaries}
\Crefname{corollary}{Corollary}{Corollaries}
\crefname{definition}{Definition}{Definitions}
\Crefname{definition}{Definition}{Definitions}
\crefname{example}{Example}{Examples}
\Crefname{example}{Example}{Examples}
\crefname{remark}{Remark}{Remarks}
\Crefname{remark}{Remark}{Remarks}
\crefname{section}{Section}{Sections}
\Crefname{section}{Section}{Sections}
\crefname{table}{Table}{Tables}
\Crefname{table}{Table}{Tables}
\crefname{figure}{Figure}{Figures}
\Crefname{figure}{Figure}{Figures}

\newcommand{\Kfour}{K_4^{(3)}}
\newcommand{\A}{\mathcal A}
\newcommand{\F}{\mathcal F}
\newcommand{\Hom}{\operatorname{Hom}}
\newcommand{\PSD}{\succeq 0}
\newcommand{\bracket}[1]{\left\llbracket #1\right\rrbracket}
\newcommand{\R}{\mathbb R}
\newcommand{\Q}{\mathbb Q}
\newcommand{\N}{\mathbb N}

\newcommand{\ex}{\operatorname{ex}}
\newcommand{\Bexact}{\frac{14993367693127837}{26880000000000000}}

\title{A New Upper Bound for the Tur\'an Density of the Tetrahedron}
\date{\today}

\author[1,2]{Gyeongwon~Jeong\thanks{These authors are joint first authors of this work.}}
\author[1,2]{Seonghun~Park\protect\footnotemark[1]}
\author[3]{Seonghyuk~Im}
\author[2]{Joonkyung~Lee}
\author[1,2]{Hongseok~Yang}
\affil[1]{School of Computing, KAIST, Daejeon, Korea}
\affil[2]{School of Computational Sciences, Korea Institute for Advanced Study (KIAS), Seoul, Korea}
\affil[3]{Center for Artificial Intelligence and Natural Sciences, Korea Institute for Advanced Study (KIAS), Seoul, Korea}
\affil[ ]{E-mail addresses: \texttt{jgyw0910@kaist.ac.kr},
\texttt{hun57@kaist.ac.kr},
\texttt{seonghyuk@kias.re.kr},
\texttt{joonkyunglee@kias.re.kr},
\texttt{hongseokyang@kias.re.kr}}

\begin{document}
\maketitle

\begin{abstract}
We prove that the Tur\'an density of the tetrahedron $\Kfour$ satisfies
\[
  \pi(\Kfour)\;\le\;\Bexact\;<\;0.557789,
\]
improving Baber's upper bound of $0.5615$ and closing about $62\%$
of the gap to the conjectured value $5/9$.  The proof uses an exact
seven-vertex flag-algebra certificate incorporating degree-stationarity
from Razborov's differential method.
To find the certificate, we combine the established
techniques of cutting planes and column generation to optimize jointly
over flag families whose types have at most five vertices.
We give a complete formal proof of this Tur\'an density bound in
Lean~4.
\end{abstract}

\section{Introduction}\label{sec:intro}

Tur\'an's tetrahedron problem, posed in 1941 \cite{Turan1941}, asks
for the maximum number $\ex(n,\Kfour)$ of edges in a $3$-uniform
hypergraph on $n$ vertices containing no copy of the
\emph{tetrahedron} $\Kfour$, the complete $3$-uniform hypergraph on
four vertices.  Its asymptotic form asks for the Tur\'an density
\[
  \pi(\Kfour)\;=\;\lim_{n\to\infty}\frac{\ex(n,\Kfour)}{\binom n3},
\]
where the limit exists because a standard averaging argument shows
that the sequence $\ex(n,\Kfour)/\binom{n}{3}$ is non-increasing.
Tur\'an~\cite{Turan1941} constructed a family of
$\Kfour$-free $3$-graphs with asymptotic density $5/9$ and conjectured
that this is best possible, i.e., that $\pi(\Kfour)=5/9$.
More generally, Erd\H{o}s~\cite{Erdos1981} offered \$500
for determining the Tur\'an density of a complete $r$-uniform
hypergraph on $t$ vertices for any one pair satisfying $t>r>2$, and \$1000
for determining these densities for all such pairs.

Tur\'an's construction for the tetrahedron problem partitions the
vertices into three nearly equal parts $V_1,V_2,V_3$
and takes every triple meeting all three parts, together with every
triple having two vertices in $V_i$ and one in $V_{i+1}$, with indices
modulo $3$.  Brown \cite{Brown1983}, Kostochka \cite{Kostochka1982},
and Frohmader \cite{Frohmader2008} developed further constructions
attaining the conjectured density.  Fon-der-Flaass
\cite{FonderFlaass1988} gave a construction of $\Kfour$-free
$3$-graphs from oriented graphs with no induced directed four-cycle,
encompassing the constructions of Tur\'an, Brown, and Kostochka.
Razborov \cite{Razborov2011} proved that several classes of $3$-graphs
arising from the Fon-der-Flaass construction have asymptotic edge density
at most $5/9$, so a construction exceeding $5/9$ would have to lie outside
the classes covered by these results.

Related work has established the conjectured density under
additional restrictions on induced subgraphs
\cite{Razborov2010,Pikhurko2011,Razborov2014}.
Other variants concern the maximum sum of squared codegrees
\cite{BaloghClemenLidicky2022,BodnarEtAl2025} and the uniform
Tur\'an density \cite{KielakEtAl2026,Bucic2026}.
For further background on this problem, see Keevash
\cite{Keevash2011} and Balogh, Clemen and Lidick\'y
\cite{BaloghClemenLidickySurvey2022}.

Proving Tur\'an's conjecture requires the same asymptotic upper bound
of $5/9$ for arbitrary $\Kfour$-free $3$-graphs.
Following earlier bounds of de Caen \cite{deCaen1988} and
Giraud (unpublished; see \cite{ChungLu1999}), Chung and Lu
\cite{ChungLu1999} proved that
\[
  \pi(\Kfour)\le\frac{3+\sqrt{17}}{12}=0.593592\ldots.
\]
A substantial improvement came from Razborov's flag-algebra method
\cite{Razborov2007,Razborov2010}, which provides a systematic way
to derive inequalities among small subgraph densities using
semidefinite programming.  Applied to six-vertex $3$-graphs, this
method yielded the bound $0.561666$, with an exact verification
given by Baber and Talbot \cite{BaberTalbot2011}.
Baber \cite[\S5.1]{Baber2012} subsequently improved the bound
to $0.5615$ using partially defined, vertex-colored $3$-graphs
together with degree-regularity constraints.

We obtain the following upper bound, narrowing the gap to Tur\'an's
conjectured value.

\begin{theorem}[main bound]\label{thm:main}
The Tur\'an density of the tetrahedron satisfies
\[
  \pi(\Kfour)\;\le\;\Bexact\;<\;0.557789.
\]
\end{theorem}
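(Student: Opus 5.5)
The proof of \cref{thm:main} will be a flag-algebra certificate on seven vertices, augmented by Razborov's degree-stationarity constraint, with all arithmetic done in exact rationals. Let $\mathcal H_7$ be the set of $\Kfour$-free $3$-graphs on seven vertices up to isomorphism. Take a convergent sequence of extremal $\Kfour$-free $3$-graphs with edge density tending to $\pi(\Kfour)$; its limit $\phi$ is a positive homomorphism from the flag algebra of $\Kfour$-free $3$-graphs to $\R$. By Razborov's differential method we may pass to a limit in which every vertex (almost surely) has link density $\pi(\Kfour)$. In the algebra this gives $\phi\big(\bracket{(e_1-\rho)\cdot f}_{1}\big)=0$ for every flag $f$ of type $1$, where $e_1$ is the type-$1$ edge flag, $\rho=\pi(\Kfour)$, and $\bracket{\cdot}$ is the averaging (unlabelling) operator. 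The same holds with the factor $(e_1-\rho)$ unrestricted in sign.

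The target inequality has the following form: exhibit
\begin{itemize}
\item rational positive semidefinite matrices $Q_\sigma$ for each type $\sigma$ with $|\sigma|\le 5$ and flag families $F_\sigma$ of the right sizes;
\item rational coefficients $c_f$ for type-$1$ flags $f$ on at most six vertices;
\end{itemize}
such that, for every $H\in\mathcal H_7$,
\[
  p(e;H)+\sum_\sigma p\Big(\bracket{F_\sigma^{\top}Q_\sigma F_\sigma}_\sigma;H\Big)+\sum_f c_f\, p\Big(\bracket{(e_1-B)f}_1;H\Big)\le B,
\]
with $B=\Bexact$. Applying $\phi$ then makes the SDP terms nonnegative and the stationarity terms vanish; we treat the factor $\rho$ versus $B$ in the next paragraph. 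Averaging over seven-vertex subsets gives $\rho\le B$.

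One point needs care. The stationarity identity uses $\rho$, not $B$. So I would argue by contradiction: assume $\rho>B$, and note that $(e_1-\rho)$ and $(e_1-B)$ differ by a constant times $\bracket{f}_1$. That residual can either be absorbed, or we can include both $\rho$-linear terms and check that the resulting inequality is monotone in $\rho$. Alternatively we can use only the weaker sign-constrained version $\phi(\bracket{(e_1-\rho)f^2}_1)=0$.

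Finding the certificate is computational. Solve the floating-point SDP with cutting planes: start from a small set of types and flags, and add the $H\in\mathcal H_7$ constraints that are violated or tight. Use column generation to add flag families over types with at most five vertices whose dual pricing improves the bound. Then round: project the near-optimal solution onto the affine space where the tight constraints hold exactly, factor $Q_\sigma=L L^\top+D$ with rational $L$ and nonnegative diagonal $D$ to certify positive semidefiniteness, and choose $B$ slightly above the numerical optimum to leave slack. Finally, verify exactly, over all graphs in $\mathcal H_7$, that the left-hand side is at most $B$; this check will also be formalized in Lean.

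I expect the main obstacle to be rounding at this scale. Many constraints are tight near the optimum and the SDP blocks are nearly singular, so naive rounding destroys PSD-ness. I would first try a reduced basis: restrict each $Q_\sigma$ to the orthogonal complement of its numerically null directions, which come from Tur\'an's construction and the zero eigenvectors it forces. I would round only inside that subspace, using the small slack between $B$ and the true numerical optimum to absorb errors.
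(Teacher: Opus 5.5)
Your plan has the same architecture as the paper's proof: a seven-vertex certificate with blocks over types of at most five roots, a degree-stationarity term, PSD matrices given by rational factor vectors, an exhaustive exact check over all of $\F^0_7$, and a Lean formalization. The substantive difference, and the one step that is not yet a proof, is how stationarity enters.

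\textbf{Stationarity.} Your terms $\bracket{(e_1-t)f}_1$ vanish at a maximizer $\phi$ only for the unknown scalar $t=\pi(\Kfour)$ (your $\rho$). After you substitute $t=B$, your seven-vertex inequalities give only $(\pi(\Kfour)-B)\bigl(1+\sum_f c_f\,\phi(\bracket{f}_1)\bigr)\le 0$. With coefficients $c_f$ of arbitrary sign, this does not yield $\pi(\Kfour)\le B$ unless you also verify exactly that $\sum_f c_f\,p(\bracket{f}_1;H)>-1$ for every $H\in\F^0_7$. This holds automatically if all $c_f\ge 0$. That is what ``absorbing the residual'' or your monotonicity check has to mean, and it must be stated and checked as part of the certificate.

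The paper avoids this by replacing the scalar with the edge-density element $\rho\in\A^0$ itself. Since $\bracket{g\,\pi^1(\rho)}_1=\rho\,\bracket{g}_1$, the element $\bracket{e_1 g}_1-\rho\,\bracket{g}_1\in\A^0$ vanishes at every maximizer with no unknown constant. The paper uses only $g=e_1$ (its $d$), i.e.\ $S=3(\rho^2-\bracket{d\cdot d}_1)$ from \eqref{eq:statelt}, with a single multiplier $\tau$ of either sign (\cref{prop:maxstat}). Since $\phi(S)$ is $-3$ times the variance of the root degree, this one condition already implies your whole family at $\phi$; the choice only affects which elements the certificate may use.

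\textbf{Flag sizes.} There is also a size slip. The product $e_1\cdot f$ lives on $|f|+2$ vertices, so at $m=7$ your type-$1$ flags can have at most five vertices, not six. In the homogeneous form the limit is four, since $\rho\,\bracket{f}_1$ needs $|f|+3$ vertices.

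\textbf{Search and rounding.} The paper's column generation does not add flag families. All blocks are fixed in advance, each $Q_b$ is restricted to a nonnegative combination of rank-one terms $vv^{\mathsf T}$, and the new $v$ are eigenvectors of $M_b(y)$ with negative eigenvalues. So every restricted problem is an LP rather than an SDP. Because the matrices are always sums of outer products, rounding the SVD-truncated factor vectors to multiples of $1/M$ preserves positive semidefiniteness automatically, and $B$ is simply the exactly recomputed maximum of $c_G$. No projection onto tight constraints, $LL^{\mathsf T}+D$ repair, or null-space reduction is needed, so the rounding obstacle you anticipate does not arise.

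Finally, as in the paper, the proof is the explicit data together with its exhaustive check. Your proposal describes how to produce such data; the stated constant is established only once a certificate reaching it is actually exhibited.
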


The new bound reduces the gap between the upper bound and the
conjectured value $5/9$ from approximately $0.00594444$ to
$0.00223342$, closing about $62\%$ of the previous gap.

The proof of \cref{thm:main} rests on an exact seven-vertex
flag-algebra certificate.  The certificate combines flag-algebra
inequalities with a degree-stationarity identity from Razborov's
differential method \cite[\S4.3]{Razborov2007}, expressing the
asymptotic degree regularity of extremal configurations
\cite[\S5.1.1]{Baber2012}.
These ingredients alone, however, do not make the seven-vertex computation tractable.
The main computational idea is to avoid handling the resulting semidefinite program in full,
and instead generate only the variables and constraints needed as the optimization proceeds.

To find this certificate, we search over flag families for all $23$
tetrahedron-free five-vertex types up to relabeling, together with
selected smaller types.  This semidefinite program (SDP) involves positive
semidefinite matrices of size up to $1024\times1024$ and more than
a million coefficient inequalities.  Handling this scale is the main
computational challenge.

We address this challenge by combining the established techniques
of cutting planes \cite{Kelley1960} and column generation
\cite{GilmoreGomory1961,AhmadiDashHall2017}.
We restrict each matrix in the SDP to a nonnegative combination of
selected rank-one positive semidefinite matrices.
Positive semidefiniteness then holds automatically, and optimizing
the combination weights in place of the matrix entries gives a
linear program (LP).  We keep this LP small by starting with a few
rank-one matrices and enforcing only a subset of the coefficient
inequalities.
Column generation adds rank-one matrices that may improve the bound,
each with a new weight to optimize.  Cutting planes add omitted
coefficient inequalities violated by the current solution, so that
the next LP enforces them.  We solve the revised LP after each update.

This search uses floating-point arithmetic and produces numerical
matrices and coefficients for a candidate certificate.
To turn this candidate into a rigorous proof, we convert its data
to rational numbers and use exact arithmetic to recompute the bound
and verify positive semidefiniteness and all coefficient inequalities.
We also give a complete formal proof of the bound in Lean~4.

\paragraph{Organization.}
Readers interested only in the mathematical proof can focus on
\crefrange{sec:flag}{sec:cert}, which can be read independently of
the optimization and experiments in
\crefrange{sec:search}{sec:experiments}.
\Cref{sec:flag} recalls the flag-algebra framework and the
degree-stationarity property of edge-density maximizers.
\Cref{sec:cert} gives a criterion for certifying upper bounds and
proves \cref{thm:main} using an exact seven-vertex certificate.
\Cref{sec:search} describes the optimization method and its
implementation for the certificate search, while
\cref{sec:experiments} compares optimization methods and SDP
formulations through computational experiments.
The appendices provide coefficient calculations and exact arithmetic
checks of the certificate, implementation details for the search,
and experimental settings and additional results.

\section{Preliminaries}\label{sec:flag}

We recall the flag-algebra framework and the degree-stationarity
property needed for the proof of \cref{thm:main}.
Throughout the paper, $[n]=\{1,\dots,n\}$, and a \emph{$3$-graph}, short for
$3$-uniform hypergraph, is a pair $G=(V(G),E(G))$ with $V(G)$ finite
and $E(G)\subseteq\binom{V(G)}3$.  For $S\subseteq V(G)$, $G[S]$ is the
induced $3$-graph on $S$.  We use Razborov's flag-algebra framework
\cite{Razborov2007} with the universal theory $\mathcal{T}$ of tetrahedron-free
$3$-graphs.  Thus, all flags below are tetrahedron-free.  Since this
theory is fixed, we omit $\mathcal{T}$ from Razborov's notation.  For
instance, we write $\A^\sigma$ instead of $\A^\sigma[\mathcal{T}]$.

\subsection{Types, flags and densities}

A \emph{type} of size $k$ is a $\Kfour$-free $3$-graph $\sigma$ with
vertex set $[k]$.  The unique type of size $0$ is written $0$, and the
unique type of size $1$ is written $\sigma_1$, often shortened to $1$.
Up to
isomorphism, there is one type of each size $k\le2$, since no triple is
available; two of size $3$, as the triple $\{1,2,3\}$ is or is not an
edge; and four of size $4$, as zero, one, two or three of its four
triples are edges, all four being a tetrahedron.  \Cref{fig:types}
draws the eight types of size $1$ to $4$ and fixes the names by which
the certificate refers to them.

\begin{figure}[t]
\centering
\newcommand{\typepic}[1]{%
  \begin{tikzpicture}[scale=1]
    \useasboundingbox (-0.62,-0.60) rectangle (0.62,0.60);
    \tikzset{edg/.style={fill=gray!30, draw=gray!85, line join=round,
                         line width=0.4pt},
             vtx/.style={fill=black, draw=none}}
    #1
  \end{tikzpicture}}
\newcommand{\typeone}{\typepic{\fill[vtx] (0,0) circle (1.6pt);}}
\newcommand{\typetwo}{\typepic{%
  \fill[vtx] (-0.22,0) circle (1.6pt);
  \fill[vtx] ( 0.22,0) circle (1.6pt);}}
\newcommand{\threepts}{%
  \path (90:0.40) coordinate (a) (210:0.40) coordinate (b)
        (330:0.40) coordinate (c);}
\newcommand{\threedots}{%
  \fill[vtx] (a) circle (1.6pt); \fill[vtx] (b) circle (1.6pt);
  \fill[vtx] (c) circle (1.6pt);}
\newcommand{\typethreebar}{\typepic{\threepts\threedots}}
\newcommand{\typethreee}{\typepic{%
  \threepts \filldraw[edg] (a) -- (b) -- (c) -- cycle; \threedots}}
\newcommand{\fourpts}{%
  \path (0,0) coordinate (z) (90:0.46) coordinate (a)
        (210:0.46) coordinate (b) (330:0.46) coordinate (c);}
\newcommand{\fourdots}{%
  \fill[vtx] (z) circle (1.6pt); \fill[vtx] (a) circle (1.6pt);
  \fill[vtx] (b) circle (1.6pt); \fill[vtx] (c) circle (1.6pt);}
\newcommand{\typefour}[1]{\typepic{\fourpts #1 \fourdots}}
\newcommand{\sliceab}{\filldraw[edg] (z) -- (a) -- (b) -- cycle;}
\newcommand{\slicebc}{\filldraw[edg] (z) -- (b) -- (c) -- cycle;}
\newcommand{\sliceca}{\filldraw[edg] (z) -- (c) -- (a) -- cycle;}
\begin{tabular}{@{}cccccccc@{}}
\typeone & \typetwo & \typethreebar & \typethreee
 & \typefour{} & \typefour{\sliceab}
 & \typefour{\sliceab\slicebc} & \typefour{\sliceab\slicebc\sliceca}\\[2pt]
$\sigma_1$ & $\sigma_2$ & $\sigma_3^{\bar E}$ & $\sigma_3^{E}$
 & $\sigma_4^0$ & $\sigma_4^1$ & $\sigma_4^2$ & $\sigma_4^3$
\end{tabular}
\caption{The types of size $1$ to $4$ used to index the chosen flag
  blocks.  A shaded region is an edge, so $\sigma_3^E$ has its triple as
  an edge while $\sigma_3^{\bar E}$ does not, and $\sigma_4^j$ has $j$
  of its four triples as edges.}
\label{fig:types}
\end{figure}

Fix a type $\sigma$ of size $k$.  A \emph{$\sigma$-flag} is a pair
$F=(M,\theta)$ of a $\Kfour$-free $3$-graph $M$ and an injection
$\theta\colon[k]\hookrightarrow V(M)$ that is a $3$-graph isomorphism
from $\sigma$ onto $M[\theta([k])]$.  The vertices $\theta(i)$ are the
\emph{roots} (the labeled vertices) of $F$; the remaining, unlabeled
vertices are its \emph{non-root vertices}.  The \emph{size} of $F$
is $|V(M)|$, counting both roots and non-root vertices.  For
$S\supseteq\theta([k])$, $F[S]=(M[S],\theta)$ is the $\sigma$-flag
induced on $S$.  Two $\sigma$-flags
$(M_1,\theta_1)$ and $(M_2,\theta_2)$ are isomorphic if some $3$-graph
isomorphism $\alpha\colon M_1\to M_2$ satisfies
$\alpha\circ\theta_1=\theta_2$.  We write $\F^\sigma_\ell$ for the
finite set of $\sigma$-flags of size $\ell$ up to isomorphism,
$\F^\sigma=\bigcup_{\ell\ge k}\F^\sigma_\ell$, and
$1_\sigma=(\sigma,\mathrm{id}_{[k]})\in\F^\sigma_k$.
The $0$-flags are ordinary $\Kfour$-free
$3$-graphs, so $\F^0_\ell$ consists of these $3$-graphs on $\ell$
vertices up to isomorphism.  We write $\rho\in\F^0_3$ for the single
hyperedge.

Let $F\in\F^\sigma_m$ and $G=(M,\theta)\in\F^\sigma_n$ with $n\ge m$.
The \emph{density} $p(F;G)$ of $F$ in $G$ is the probability that, for
a uniformly random $(m-k)$-subset $S$ of $V(M)\setminus\theta([k])$,
the induced flag $G[S\cup\theta([k])]$ is isomorphic to $F$; for $n<m$,
put $p(F;G)=0$.  For $F_1\in\F^\sigma_{m_1}$ and $F_2\in\F^\sigma_{m_2}$
with $(m_1-k)+(m_2-k)\le n-k$, the \emph{pair density}
$p(F_1,F_2;G)$ is the probability that, for a uniformly random pair of
\emph{disjoint} subsets $S_1,S_2$ of $V(M)\setminus\theta([k])$ of
sizes $m_1-k$ and $m_2-k$, the flags $G[S_i\cup\theta([k])]$ are
isomorphic to $F_i$ for $i=1,2$; it is $0$ when
$(m_1-k)+(m_2-k)>n-k$.
Both densities are invariant under isomorphism of all their arguments.

\begin{lemma}[chain rule {\cite[Lemma~2.2]{Razborov2007}}]\label{lem:chain}
For all $F\in\F^\sigma_m$, $G\in\F^\sigma_n$ and $m\le\ell\le n$,
\[
  p(F;G)\;=\;\sum_{F'\in\F^\sigma_\ell}p(F;F')\,p(F';G).
\]
\end{lemma}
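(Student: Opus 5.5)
We prove the chain rule by a two-stage random sampling argument, computing the left-hand side in two ways. Write $G=(M,\theta)$ and $R=\theta([k])$, so $|V(M)\setminus R|=n-k$.

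First we realize $p(F;G)$ via a nested choice. Pick a uniformly random $(\ell-k)$-subset $T\subseteq V(M)\setminus R$. Then pick, independently of everything else given $T$, a uniformly random $(m-k)$-subset $S\subseteq T$. The key observation is that the marginal distribution of $S$ is uniform over all $(m-k)$-subsets of $V(M)\setminus R$. This holds because every such subset is contained in the same number $\binom{n-m}{\ell-m}$ of $(\ell-k)$-subsets, and each contributes with the same probability $1/\bigl(\binom{n-k}{\ell-k}\binom{\ell-k}{m-k}\bigr)$. It follows that
\[
  p(F;G)=\Pr\bigl[G[S\cup R]\cong F\bigr].
\]

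Next we condition on the isomorphism class of the intermediate flag $G[T\cup R]$, which is a $\sigma$-flag of size $\ell$; it is $\Kfour$-free, being induced in $M$. By the law of total probability,
\[
  \Pr\bigl[G[S\cup R]\cong F\bigr]=\sum_{F'\in\F^\sigma_\ell}\Pr\bigl[G[T\cup R]\cong F'\bigr]\,\Pr\bigl[G[S\cup R]\cong F \bigm| G[T\cup R]\cong F'\bigr].
\]
The first factor is $p(F';G)$ by definition.

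For the conditional factor, note that $G[S\cup R]=(G[T\cup R])[S\cup R]$, and $S$ is a uniform $(m-k)$-subset of the non-root vertices of $G[T\cup R]$. So for each fixed $T$ with $G[T\cup R]\cong F'$, the conditional probability equals $p(F;G[T\cup R])=p(F;F')$, using the isomorphism invariance of $p$ (a flag isomorphism fixes the roots and carries non-root subsets bijectively). Averaging over such $T$ gives $p(F;F')$, and substituting yields the claimed identity. Terms with $\Pr[G[T\cup R]\cong F']=0$ vanish, so the conditioning is harmless.

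The only delicate step is the uniformity of the marginal of $S$. The edge cases follow from the hypothesis $m\le \ell\le n$. If $\ell=n$, then $T$ is all of $V(M)\setminus R$, and $p(F';G)$ is the indicator of $G\cong F'$, so the identity is trivial. In general, $\ell\le n$ guarantees that $T$ exists, and $m\le\ell$ guarantees that $S$ exists, so no zero-density convention is needed.
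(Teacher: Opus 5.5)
Your argument is correct and is the standard one. The paper gives no proof of its own and cites Razborov's Lemma~2.2, whose justification is this same two-stage sampling observation: a uniform $(m-k)$-subset of the non-root vertices can be drawn by first choosing a uniform $(\ell-k)$-subset $T$ and then a uniform $(m-k)$-subset inside $T$, with your uniform-marginal check and conditioning on the class of $G[T\cup R]$ supplying the details.
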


\subsection{The flag algebra}\label{sec:algebra}

Let $\R\F^\sigma$ be the real vector space with basis $\F^\sigma$, and
let $\mathcal K^\sigma\subseteq\R\F^\sigma$ be the subspace spanned by
the elements
\begin{equation}\label{eq:quotient-generators}
  F-\sum_{F'\in\F^\sigma_\ell}p(F;F')\,F',
  \qquad F\in\F^\sigma_m,\ \ell\ge m .
\end{equation}
The \emph{flag algebra} of $\sigma$ is the quotient space
$\A^\sigma=\R\F^\sigma/\mathcal K^\sigma$, in which the image of a
flag $F$ is written $F$ again.  Each expression in
\eqref{eq:quotient-generators} is zero in this quotient; these
equalities are the \emph{expansion relations}.
Thus, every $F\in\F^\sigma_m$ equals
its \emph{expansion} $\sum_{F'\in\F^\sigma_\ell}p(F;F')F'$ for every
$\ell\ge m$; these expansions are consistent by \cref{lem:chain}.
Expanding $1_\sigma$, whose density in every $\sigma$-flag is $1$,
gives
\begin{equation}\label{eq:unit}
  \sum_{F\in\F^\sigma_\ell}F\;=\;1_\sigma\qquad(\ell\ge k).
\end{equation}

The quotient carries a product as well.  For $F_1\in\F^\sigma_{m_1}$
and $F_2\in\F^\sigma_{m_2}$, set $\ell=m_1+m_2-k$, the number of
vertices occupied by two flags sharing their $k$ roots and having
disjoint non-root vertices.  Define their product by\footnote{We
use the smallest possible number of vertices to simplify the presentation.
For any $n\ge\ell$, the sum
$\sum_{F\in\F^\sigma_n}p(F_1,F_2;F)F$ gives the same element of
$\A^\sigma$, since the expressions in \eqref{eq:quotient-generators}
vanish in this quotient.}
\[
  F_1\cdot F_2\;:=\;\sum_{F\in\F^\sigma_\ell}p(F_1,F_2;F)\,F .
\]
Extend this formula bilinearly to $\R\F^\sigma$ and pass to the
quotient.  The result is independent of the representatives chosen
for the algebra elements.  It makes $\A^\sigma$ a commutative associative
algebra with unit $1_\sigma$ \cite[\S2]{Razborov2007}.
We write $1=1_0$ for the unit of $\A^0$.

The product reflects asymptotic independence.  For fixed
$\sigma$-flags $F_1,F_2$ and a $\sigma$-flag $G$ on $n$ vertices,
\[
  p(F_1,F_2;G)=p(F_1;G)p(F_2;G)+O(1/n)
  \qquad(n\to\infty).
\]
Choose the two sets of non-root vertices independently and uniformly,
with the sizes required for $F_1$ and $F_2$.  The probability of
obtaining both flags is $p(F_1;G)p(F_2;G)$.  Conditional on the sets
being disjoint, this probability is $p(F_1,F_2;G)$.
Their overlap probability is $O(1/n)$, since their sizes are fixed.
By the law of total probability, conditioning on disjointness changes
the probability of any event by at most this overlap probability,
giving the displayed estimate \cite[Lemma~2.3]{Razborov2007}.
As formalized in the next subsection, limiting densities preserve
the flag-algebra product.

\subsection{Positive homomorphisms}

A \emph{positive homomorphism} is an algebra homomorphism
$\phi\colon\A^\sigma\to\R$ (linear, multiplicative,
$\phi(1_\sigma)=1$) with $\phi(F)\ge0$ for every $F\in\F^\sigma$; the
set of positive homomorphisms is $\Hom^+(\A^\sigma,\R)$.  Applying
$\phi$ to \eqref{eq:unit} gives $\sum_{F\in\F^\sigma_\ell}\phi(F)=1$,
so each $\phi(F)$ lies in $[0,1]$.  A sequence $\{G_n\}$ of $\sigma$-flags with
$|V(G_n)|\to\infty$ is \emph{convergent} if
$\lim_{n\to\infty}p(F;G_n)$ exists for every $F\in\F^\sigma$.

\begin{theorem}[{\cite[Theorem~3.3]{Razborov2007}}]\label{thm:limits}
The elements of $\Hom^+(\A^\sigma,\R)$ are exactly the limits of
convergent sequences $\{G_n\}$ of $\sigma$-flags, with
$\phi(F)=\lim_{n\to\infty}p(F;G_n)$ for every $F\in\F^\sigma$.
\end{theorem}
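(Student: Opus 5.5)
This is Razborov's Theorem~3.3; the plan is to prove the two inclusions separately, since they need very different tools.

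\emph{Limits are positive homomorphisms.} Let $\{G_n\}$ be convergent and set $\phi(F)=\lim_n p(F;G_n)$ for $F\in\F^\sigma$, extended linearly to $\R\F^\sigma$.
\begin{itemize}
\item \emph{Well defined on $\A^\sigma$.} By \cref{lem:chain}, every generator \eqref{eq:quotient-generators} evaluates to $0$ at each $G_n$ with $|V(G_n)|\ge\ell$. Passing to the limit, $\phi$ kills $\mathcal K^\sigma$.
\item \emph{Positivity and unit.} We have $\phi(F)\ge0$ for every flag, and $\phi(1_\sigma)=1$ because $p(1_\sigma;G_n)=1$.
\item \emph{Multiplicativity.} For flags $F_1,F_2$, \cref{lem:chain} applied with the sum over $\F^\sigma_\ell$ gives
\[
\sum_{F\in\F^\sigma_\ell}p(F_1,F_2;F)\,p(F;G_n)=p(F_1,F_2;G_n).
\]
This is obtained by first choosing a random $\ell$-vertex set containing the roots, then the disjoint pair inside it. By the asymptotic independence estimate, the right-hand side equals $p(F_1;G_n)p(F_2;G_n)+O(1/n)$. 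Taking limits gives $\phi(F_1F_2)=\phi(F_1)\phi(F_2)$, and bilinearity finishes this direction.
\end{itemize}

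\emph{Positive homomorphisms are limits.} This is the main obstacle. Given $\phi\in\Hom^+(\A^\sigma,\R)$, I would construct random $\sigma$-flags realizing $\phi$ on average and then concentrate them.

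\begin{enumerate}
\item \emph{Consistent random flags.} For each $\ell$, the numbers $\phi(F)$ with $F\in\F^\sigma_\ell$ are nonnegative and sum to $1$ by \eqref{eq:unit}. So they define a probability distribution $\mu_\ell$ on $\F^\sigma_\ell$. The expansion relations say that sampling $\mathbf F\sim\mu_n$ and taking a random sub-flag of size $\ell$ yields $\mu_\ell$. In particular $\mathbb E[p(F;\mathbf F_n)]=\phi(F)$ for every $F$.
\item \emph{Variance bound.} Multiplicativity gives
\[
\mathbb E\bigl[p(F;\mathbf F_n)^2\bigr]=\mathbb E\bigl[p(F,F;\mathbf F_n)\bigr]+O(1/n)=\phi(F^2)+O(1/n)=\phi(F)^2+O(1/n).
\]
The middle step is again the chain rule, applied to $F\cdot F$ expanded at size $n$. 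Hence $\operatorname{Var}\,p(F;\mathbf F_n)=O(1/n)$.
\item \emph{Choosing the sequence.} By Chebyshev, for each $n$ there is a realization $G_n$ with $|p(F;G_n)-\phi(F)|\le\varepsilon_n$ simultaneously for the finitely many flags $F$ of size at most $s_n$. Here $s_n\to\infty$ slowly and $\varepsilon_n\to0$, chosen so that a union bound over these flags still leaves positive probability.
\item \emph{Conclusion.} A diagonal choice then yields a convergent sequence $\{G_n\}$ whose limit is $\phi$.
\end{enumerate}

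The delicate point is the variance computation: one must carefully justify that $\phi$ applied to the expansion of $F\cdot F$ at level $n$ matches $\mathbb E[p(F,F;\mathbf F_n)]$, which is exactly the consistency furnished by \cref{lem:chain}.
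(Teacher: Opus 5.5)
Your proof is correct and follows essentially the same route as Razborov's original proof of \cite[Theorem~3.3]{Razborov2007}, which the paper cites rather than reproves. The forward direction uses \cref{lem:chain}, its pair-density analogue (which you correctly justify by two-stage sampling) and asymptotic independence; the converse uses the random flag $\mathbf F_n$ with law $\phi|_{\F^\sigma_n}$, the variance bound from $\phi(F^2)=\phi(F)^2$, and Chebyshev with a union bound, and the closing diagonal step is not even needed, since $s_n\to\infty$ and $\varepsilon_n\to0$ already give $p(F;G_n)\to\phi(F)$ for each fixed $F$.
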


For $f,g\in\A^\sigma$, we write $f\le g$ if $\phi(f)\le\phi(g)$ for
every $\phi\in\Hom^+(\A^\sigma,\R)$ \cite[\S3]{Razborov2007}, and
$f\le c$ for $f\le c\cdot1_\sigma$ when $c\in\R$.  Call $f$
\emph{nonnegative} if $f\ge0$.  Flags, squares, and nonnegative linear
combinations of nonnegative elements are nonnegative.

\begin{lemma}[coefficient bound]\label{lem:coeffbound}
If $f=\sum_{F\in\F^\sigma_\ell}c_F\,F$ in $\A^\sigma$, then
$f\le\max_{F\in\F^\sigma_\ell}c_F$.
\end{lemma}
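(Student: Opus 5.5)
The plan is to unfold the definition of $\le$ and reduce the statement to the unit identity \eqref{eq:unit} together with nonnegativity of positive homomorphisms on flags. Write $c^*=\max_{F\in\F^\sigma_\ell}c_F$; the maximum exists because $\F^\sigma_\ell$ is finite. If $\F^\sigma_\ell$ were empty, \eqref{eq:unit} would force $1_\sigma=0$ in $\A^\sigma$. That is impossible, since $1_\sigma$ has density $1$ in every $\sigma$-flag and the expansion relations are consistent by \cref{lem:chain}. So we may work with $\ell\ge k$ and a nonempty family, which is the setting in which the statement is meaningful.

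Fix an arbitrary $\phi\in\Hom^+(\A^\sigma,\R)$. The required inequality $f\le c^*$ means $\phi(f)\le\phi(c^*1_\sigma)=c^*$. By linearity, $\phi(f)=\sum_{F\in\F^\sigma_\ell}c_F\,\phi(F)$. Applying $\phi$ to \eqref{eq:unit} gives $\sum_{F\in\F^\sigma_\ell}\phi(F)=\phi(1_\sigma)=1$, and positivity gives $\phi(F)\ge0$ for each $F$. Hence $\phi(f)$ is a convex combination of the coefficients $c_F$, and so
\[
  \phi(f)\;=\;\sum_{F\in\F^\sigma_\ell}c_F\,\phi(F)\;\le\;c^*\sum_{F\in\F^\sigma_\ell}\phi(F)\;=\;c^*.
\]
Since $\phi$ was arbitrary, this gives $f\le c^*$ in the sense defined before the lemma.

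No step is a genuine obstacle. The only point needing care is that the expression $\sum c_F F$ is an identity in the quotient $\A^\sigma$, not in $\R\F^\sigma$. This is harmless, because $\phi$ is defined on $\A^\sigma$ and is linear there. The only inputs used are \eqref{eq:unit}, which the paper has already justified via the expansion relations and \cref{lem:chain}, and the defining properties of positive homomorphisms. \Cref{thm:limits} is not needed, although it gives the intuition: in any large flag the densities of size-$\ell$ flags sum to $1$.
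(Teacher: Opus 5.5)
Your proof is correct and is exactly the paper's argument: fix an arbitrary $\phi\in\Hom^+(\A^\sigma,\R)$, use linearity, nonnegativity on flags and the unit identity \eqref{eq:unit} to see that $\phi(f)$ is a convex combination of the $c_F$, hence at most $\max_F c_F$. The nonemptiness digression is not needed, and your justification for it is circular, since it presupposes that large $\sigma$-flags exist; the direct reason is that adjoining $\ell-k$ isolated vertices to $\sigma$ gives a tetrahedron-free $\sigma$-flag of size $\ell$.
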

\begin{proof}
For every $\phi\in\Hom^+(\A^\sigma,\R)$, positivity and
\eqref{eq:unit} give
\[
  \phi(f)=\sum_{F\in\F^\sigma_\ell}c_F\phi(F)
  \;\le\;\Bigl(\max_{F\in\F^\sigma_\ell}c_F\Bigr)
    \sum_{F\in\F^\sigma_\ell}\phi(F)
  \;=\;\max_{F\in\F^\sigma_\ell}c_F .\qedhere
\]
\end{proof}

The extremal problem can now be expressed in terms of positive
homomorphisms.

\begin{proposition}\label{prop:attained}
$\pi(\Kfour)=\max_{\phi\in\Hom^+(\A^0,\R)}\phi(\rho)$.
\end{proposition}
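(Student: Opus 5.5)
We prove the two inequalities separately and also show that the maximum is attained. Throughout, \cref{thm:limits} identifies $\Hom^+(\A^0,\R)$ with limits of convergent sequences of $\Kfour$-free $3$-graphs. Note that $p(\rho;G)=|E(G)|/\binom{|V(G)|}{3}$ is exactly the edge density of a $0$-flag $G$.

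\emph{Lower bound.} For each $n\ge 3$, pick an extremal $\Kfour$-free $3$-graph $G_n$ on $n$ vertices, so that $p(\rho;G_n)=\ex(n,\Kfour)/\binom n3$. The set $\F^0$ is countable and each density lies in $[0,1]$. A diagonal argument therefore gives a subsequence along which $p(F;G_n)$ converges for every $F\in\F^0$. By \cref{thm:limits}, this subsequence defines some $\phi\in\Hom^+(\A^0,\R)$. Since the ratio $\ex(n,\Kfour)/\binom n3$ is non-increasing, it converges, and so $\phi(\rho)=\pi(\Kfour)$. Hence the supremum over $\Hom^+$ is at least $\pi(\Kfour)$, and this particular $\phi$ will attain the maximum once the upper bound is proved.

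\emph{Upper bound.} Take any $\phi\in\Hom^+(\A^0,\R)$. By \cref{thm:limits}, it is the limit of a convergent sequence $\{G_n\}$ of $\Kfour$-free $3$-graphs with $|V(G_n)|\to\infty$. Each $G_n$ satisfies
\[
p(\rho;G_n)\le \ex(|V(G_n)|,\Kfour)\Big/\binom{|V(G_n)|}{3}.
\]
The right-hand side tends to $\pi(\Kfour)$ as $|V(G_n)|\to\infty$. Passing to the limit gives $\phi(\rho)\le\pi(\Kfour)$.

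\emph{Main obstacle.} The only delicate point is existence of the maximum, which requires the compactness (diagonal subsequence) step to produce a genuine element of $\Hom^+$. Here \cref{thm:limits} does the work: it guarantees that the pointwise limit is multiplicative and positive, so the diagonalization is the only thing left to check.
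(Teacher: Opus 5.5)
Your proposal is correct and follows the paper's proof essentially step for step. You take extremal $\Kfour$-free graphs, extract a diagonal subsequence, and invoke \cref{thm:limits} to get a maximizer $\psi$ with $\psi(\rho)=\pi(\Kfour)$; you then realize an arbitrary $\phi\in\Hom^+(\A^0,\R)$ by a convergent sequence and pass to the limit in $p(\rho;G'_n)\le\ex(|V(G'_n)|,\Kfour)/\binom{|V(G'_n)|}{3}$.
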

\begin{proof}
For each $n\ge3$, choose a $\Kfour$-free $3$-graph $G_n$ on $n$
vertices with $\ex(n,\Kfour)$ edges.  Since $\F^0$ is countable
and every density lies in $[0,1]$, a diagonal argument gives a
subsequence along which $p(F;G_n)$ converges for every $F\in\F^0$.
By \cref{thm:limits}, these limits define a positive homomorphism
$\psi$ with $\psi(\rho)=\pi(\Kfour)$.
Conversely, every $\phi\in\Hom^+(\A^0,\R)$ is realized by a
convergent sequence $\{G'_n\}$ with $|V(G'_n)|\to\infty$.
Taking limits in
\[
  p(\rho;G'_n)\le
  \frac{\ex(|V(G'_n)|,\Kfour)}{\binom{|V(G'_n)|}{3}}
\]
gives $\phi(\rho)\le\pi(\Kfour)$.
\end{proof}

\subsection{The downward operator}\label{sec:downward}

For a type $\sigma$ of size $k$ and a $\sigma$-flag $F=(M,\theta)$ of
size $m$, let $F|_0\in\F^0_m$ denote the underlying $3$-graph $M$,
obtained from $F$ by forgetting the root labeling $\theta$.  Let
$q_\sigma(F)\in(0,1]$ be the probability that a uniformly random
injection $\theta'\colon[k]\hookrightarrow V(M)$ makes $(M,\theta')$ a
$\sigma$-flag isomorphic to $F$.  The choice is over all injections,
without conditioning on the induced type.  Define
\[
  \bracket{F}_\sigma\;:=\;q_\sigma(F)\cdot F|_0 .
\]
The linear extension of this assignment to $\R\F^\sigma$ induces a map
$\bracket{\cdot}_\sigma\colon\A^\sigma\to\A^0$ on the quotient algebras
\cite[\S2.2]{Razborov2007}.  We call this map the \emph{downward}
(label-averaging) operator.

For example, let $H$ be the $3$-graph on four vertices with exactly
one edge.  Rooting $H$ at a vertex of the edge gives a $1$-flag
$F_{\mathrm{in}}$, whereas rooting it at the isolated vertex gives
a different $1$-flag $F_{\mathrm{out}}$.  Hence
\[
  \bracket{F_{\mathrm{in}}}_1=\frac34H,
  \qquad
  \bracket{F_{\mathrm{out}}}_1=\frac14H.
\]
The downward operator thus accounts for the probability of the
root placement as well as forgetting its label.

\begin{theorem}[{\cite[Theorem~3.1(a)]{Razborov2007}}]\label{thm:downward}
If $f\ge0$ in $\A^\sigma$, then $\bracket f_\sigma\ge0$ in $\A^0$.
\end{theorem}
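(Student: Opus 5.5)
The plan is to prove \cref{thm:downward} through the limit characterization of \cref{thm:limits}, reducing positivity of $\bracket f_\sigma$ to an averaging identity over finite $3$-graphs. Fix $\psi\in\Hom^+(\A^0,\R)$; we must show $\psi(\bracket f_\sigma)\ge0$. By \cref{thm:limits}, $\psi$ is the limit of a convergent sequence $\{G_n\}$ of $\Kfour$-free $3$-graphs. Represent $f$ as $\sum_{F\in\F^\sigma_\ell}c_F F$ for a single $\ell$, using the expansion relations. By linearity, $\bracket f_\sigma=\sum_F c_F q_\sigma(F)F|_0$.

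The first key step is the finite averaging identity
\[
  p\bigl(\bracket{F}_\sigma;G\bigr)
  \;=\;\mathbb E_{\theta}\,p\bigl(F;(G,\theta)\bigr),
\]
where $\theta\colon[k]\hookrightarrow V(G)$ is a uniformly random injection, and $p(F;(G,\theta))$ is taken to be $0$ when $(G,\theta)$ is not a $\sigma$-flag. Here $p(\bracket F_\sigma;G)$ means $q_\sigma(F)\,p(F|_0;G)$. To prove it, I count pairs consisting of an $\ell$-set $S$ and an injection $\theta$ into $S$. Either way of counting gives the probability that a uniformly random $\ell$-subset $S$, together with a uniformly random injection of $[k]$ into $S$, yields a flag isomorphic to $F$. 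Summing with coefficients $c_F$ then gives $\psi(\bracket f_\sigma)=\lim_n \mathbb E_\theta\, p(f;(G_n,\theta))$.

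The second step, which I expect to be the main obstacle, is to show that $\liminf_n \mathbb E_\theta\,p(f;(G_n,\theta))\ge0$. Pointwise positivity $f\ge0$ only applies to limits of $\sigma$-flags, not to finite flags, so I will argue by contradiction. Suppose that along a subsequence $\mathbb E_\theta\, p(f;(G_n,\theta))\le-\varepsilon$. Since $|p(f;\cdot)|\le\sum|c_F|$, a positive proportion (at least $\varepsilon/(2\sum|c_F|)$) of the rooted flags $(G_n,\theta_n)$ satisfy $p(f;(G_n,\theta_n))\le-\varepsilon/2$. Pick one such $\theta_n$ for each $n$. Because $\F^\sigma$ is countable and all densities lie in $[0,1]$, a diagonal argument extracts a subsequence along which $(G_n,\theta_n)$ converges as $\sigma$-flags. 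By \cref{thm:limits}, the limit defines some $\phi\in\Hom^+(\A^\sigma,\R)$ with $\phi(f)\le-\varepsilon/2<0$. This contradicts $f\ge0$.

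One subtlety remains: the event that $(G_n,\theta)$ is a $\sigma$-flag may have probability tending to $0$. This is harmless, because those $\theta$ contribute $0$ to the expectation. Any negative mass must therefore come from genuine $\sigma$-flags, and the set of bad $\theta$ is nonempty for large $n$, which is all the extraction step needs. Combining the two steps gives $\psi(\bracket f_\sigma)\ge0$ for every $\psi$, that is, $\bracket f_\sigma\ge0$.
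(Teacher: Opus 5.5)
Your proof is correct. The paper gives no argument for this statement; it imports it from Razborov, so the useful comparison is with the standard development there.

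Your argument has two ingredients. The first is the exact finite identity $q_\sigma(F)\,p(F|_0;G)=\mathbb E_\theta\,p(F;(G,\theta))$. Your double count is right: pairs consisting of an $\ell$-set $T$ and an injection $[k]\hookrightarrow T$ arise uniformly either way, since $(n)_k\binom{n-k}{\ell-k}=\binom n\ell(\ell)_k$, and rootings not inducing $\sigma$ contribute $0$ on both sides. The second is a compactness step. It turns the semantic condition $f\ge0$ into asymptotic nonnegativity of $p(f;(G_n,\theta))$, uniformly over rootings, via a diagonal extraction and the easy direction of \cref{thm:limits}.

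A common alternative goes through Razborov's random extensions. When $\psi(\bracket{1_\sigma}_\sigma)>0$, one has $\psi(\bracket f_\sigma)=\psi(\bracket{1_\sigma}_\sigma)\,\mathbb E[\phi^\sigma(f)]$ for a random $\phi^\sigma\in\Hom^+(\A^\sigma,\R)$, and positivity is immediate. The case $\psi(\bracket{1_\sigma}_\sigma)=0$ is trivial, because $0\le\psi(\bracket F_\sigma)\le\psi(\bracket{1_\sigma}_\sigma)$ for every flag $F$. That route gives a reusable integral representation, but it requires constructing a probability measure on $\Hom^+(\A^\sigma,\R)$. Yours needs only one bad rooting per $n$ and handles the degenerate case without a separate argument.

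Two small simplifications are available:
\begin{itemize}
\item The positive-proportion estimate is unnecessary. The bound $\mathbb E_\theta\,p(f;(G_n,\theta))\le-\varepsilon$ already produces some $\theta$ with $p(f;(G_n,\theta))\le-\varepsilon$, and such a $\theta$ is necessarily a genuine $\sigma$-flag.
\item Since $\lim_n\mathbb E_\theta\,p(f;(G_n,\theta))=\psi(\bracket f_\sigma)$ exists, you can work with the whole tail of the sequence rather than a subsequence.
\end{itemize}
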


For a vector of $\sigma$-flags $\mathbf F=(F_1,\dots,F_m)^{\mathsf T}$
and a positive semidefinite (PSD)\footnote{A
real symmetric matrix $Q\in\R^{m\times m}$ is positive semidefinite
if $x^{\mathsf T}Qx\ge0$ for every $x\in\R^m$.} matrix $Q\in\R^{m\times m}$,
choose a decomposition $Q=\sum_ru_ru_r^{\mathsf T}$.  Then, \cref{thm:downward}
gives
\[
  \bracket{\mathbf F^{\mathsf T}Q\mathbf F}_\sigma
  \;=\;\sum_r\bracket{(u_r^{\mathsf T}\mathbf F)^2}_\sigma\;\ge\;0 .
\]
Thus, PSD matrices give nonnegative quadratic forms in $\A^\sigma$,
and the downward operator maps these forms to nonnegative elements
of $\A^0$.

\subsection{Stationarity constraint}\label{sec:stationarity}

Let $d\in\F^1_3$ be the hyperedge with one root: the flag of type
$\sigma_1$ (shown in \cref{fig:types}) on three vertices whose
single edge contains the root and the two non-root vertices.
Every choice of the root gives
the same flag, so $q_{\sigma_1}(d)=1$ and
$\bracket{d}_1=\rho$, where $\rho$ is the unrooted three-vertex
$3$-graph consisting of a single edge.  We abbreviate a $1$-flag
$(G,\theta)$ to $(G,v)$,
where $v=\theta(1)$ is the root.
For a $1$-flag $(G,v)$ with $n=|V(G)|\ge3$,
\[
  p(d;(G,v))=\frac{\deg_G(v)}{\binom{n-1}{2}},
  \qquad \deg_G(v):=|\{e\in E(G):v\in e\}| .
\]
Consequently, if $\phi^1$ is realized by a convergent sequence
$\{(G_n,v_n)\}$ as in \cref{thm:limits}, then
$\phi^1(d)=\lim_{n\to\infty}\deg_{G_n}(v_n)/\binom{|V(G_n)|-1}{2}$,
the limit of the normalized degrees of the roots $v_n$ in $G_n$.
Define
\begin{equation}\label{eq:statelt}
  S\;:=\;3\bigl(\rho^2-\bracket{d\cdot d}_1\bigr)\;\in\;\A^0 .
\end{equation}
By \cite[Theorem~3.14]{Razborov2007} with $f=d$ and
$g=1_{\sigma_1}$, one has $S\le0$.  Call
$\phi\in\Hom^+(\A^0,\R)$ \emph{degree-stationary} when $\phi(S)=0$.
This condition means that the normalized degree of a uniformly
random vertex has vanishing variance along a convergent sequence
representing $\phi$.

\begin{proposition}\label{prop:maxstat}
Every maximizer of $\phi\mapsto\phi(\rho)$ on $\Hom^+(\A^0,\R)$ is
degree-stationary.
\end{proposition}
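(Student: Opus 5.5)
We argue by contradiction, using Razborov's vertex-deletion perturbation from the differential method. Let $\phi$ maximize $\phi(\rho)$ on $\Hom^+(\A^0,\R)$; such a maximizer exists by \cref{prop:attained}, which also gives $\phi(\rho)=\pi(\Kfour)$. Suppose $\phi(S)\neq0$. Since $S\le0$, this means $\phi(S)<0$.

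\emph{Step 1: identify $-\phi(S)$ as a variance.} Let $\phi^1$ be the random extension of $\phi$ to $\A^{\sigma_1}$, that is, the root is a uniformly random vertex. Since $\bracket{d}_1=\rho$ and $\bracket{d\cdot d}_1$ is the averaged second moment of $d$, we get
\[
  \phi(S)=3\bigl(\phi(\rho)^2-\mathbb E[\phi^1(d)^2]\bigr)=-3\,\mathrm{Var}(\phi^1(d)).
\]
So $\phi(S)<0$ means the normalized degree $\phi^1(d)$ is non-constant on a set of positive measure, with mean $\phi(\rho)$. In particular there is $\varepsilon_0>0$ such that the event $\phi^1(d)<\phi(\rho)-\varepsilon_0$ has probability $\alpha>0$. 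Extracting such a positive-measure set of below-average roots is the main technical step. Razborov handles it by conditioning on the root flag, via \cite[Theorem~3.14]{Razborov2007} and the construction of $\phi^{\sigma}$ in \S3.2 of that paper.

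\emph{Step 2: delete low-degree vertices.} Rather than redoing the measure theory, we use \cite[Theorem~3.14]{Razborov2007} directly. Its proof shows that if $\phi$ maximizes $\phi(\rho)$, then for any $f\in\A^{\sigma_1}$ with $f\ge0$ we have
\[
  \phi\bigl(\bracket{f\cdot(d-\rho)}_1\bigr)\ge0 ,
\]
that is, removing vertices weighted by $f$ cannot increase the density. This is the first-order optimality condition for the perturbation $\phi\mapsto\phi_{f,t}$ that deletes a $t$-fraction of vertices weighted by $f$. Combinatorially, we take a convergent sequence $G_n$ realizing $\phi$ and delete the vertices whose degrees are less than the average minus $\varepsilon_0\binom{n}{2}$. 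Deleting a vertex of degree $\deg(v)$ changes the edge density by roughly $\tfrac{3}{n}\bigl(\phi(\rho)-\deg(v)/\binom{n-1}{2}\bigr)$. The resulting graph is still $\Kfour$-free, so its density is at most $\pi(\Kfour)+o(1)$.

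\emph{Step 3: apply it with $f=d$.} Taking $f=d$ gives $\phi(\bracket{d^2}_1)\ge\phi(\rho)\phi(\bracket{d}_1)=\phi(\rho)^2$, which is just $S\le0$ again. The contradiction must therefore come from choosing $f$ supported on the low-degree roots. With $f=1_{\sigma_1}-d/\phi(\rho)$ we would need $f\ge0$, which fails in general. So the plan instead argues the deletion step combinatorially.

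Delete, one at a time, a vertex of minimum degree. While the minimum normalized degree stays below $\phi(\rho)-\varepsilon_0$, each deletion raises the density by at least about $3\varepsilon_0/n$. If $\phi(S)<0$, the variance bound together with $d\le1$ forces a fraction $\alpha>0$ of vertices with normalized degree at most $\phi(\rho)-\varepsilon_0'$, for some $\varepsilon_0'>0$. Deleting $\alpha n/2$ of them in sequence, each still of low degree since the degrees change by only $O(\alpha)$ relative to the threshold once $\alpha$ is shrunk, raises the density to $\phi(\rho)+c\,\alpha\varepsilon_0'$ for a constant $c>0$. The remaining graphs are $\Kfour$-free on $(1-\alpha/2)n\to\infty$ vertices, so their density is at most $\pi(\Kfour)+o(1)$. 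This contradicts $\phi(\rho)=\pi(\Kfour)$.

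The main obstacle is controlling how degrees drift during the sequential deletions, so that the deleted vertices remain below average. I would first handle this by shrinking the deleted fraction to a small multiple of $\varepsilon_0'$, so the drift stays at most $\varepsilon_0'/2$.
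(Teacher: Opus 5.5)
Your closing vertex-deletion argument is correct, and it takes a genuinely different route from the paper. The paper stays inside the flag algebra. It computes $\bracket{d\cdot\partial_1\rho}_1=S$ from $\partial_1\rho=3(\pi^1(\rho)-d)$ and \cite[Theorem~2.8(a)]{Razborov2007}. It then quotes the stationarity criterion \cite[Corollary~4.6(a)]{Razborov2007} for the vertex-uniform theory $\mathcal T$, with $M_1=\rho$, objective $f(t)=-t$ and $g=d$.

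You instead reprove the special case of the differential method you need, working directly on a sequence $G_n$ realizing $\phi$. Deleting a vertex of normalized degree $\delta$ from an $n$-vertex $3$-graph of density $p$ gives density exactly $p+3(p-\delta)/(n-3)$. So a positive fraction of vertices lying $\varepsilon$ below average yields $\Kfour$-free graphs on $(1-\beta)n\to\infty$ vertices whose density exceeds $\pi(\Kfour)$ by a constant. The paper's proof is short but rests on Razborov's machinery; yours is elementary and self-contained.

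Two places can be tightened.
\begin{itemize}
\item Do Step~1 on $G_n$ rather than through the random extension. Here $p(\rho;G_n)$ is exactly the mean of the normalized degrees, and $\frac1n\sum_v p(d,d;(G_n,v))$ is their mean square up to $O(1/n)$. So the empirical degree variance tends to $-\phi(S)/3$. Bounded, mean-zero deviations then force a positive fraction of vertices at least $\varepsilon$ below the mean, with no measure theory.
\item Delete a set $X$ of $\lfloor\beta n\rfloor$ such vertices at once. The bound $e(G_n-X)\ge e(G_n)-\sum_{x\in X}\deg(x)$ gives density at least $p_n+3\beta\varepsilon-O(\beta^2)-o(1)$, so no control of degree drift is needed.
\end{itemize}

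Steps~2--3 need correcting, even though you abandon them. \cite[Theorem~3.14]{Razborov2007} is a Cauchy--Schwarz inequality valid for every $\phi$; the paper uses it only to get $S\le0$. Neither it nor its proof says anything about maximizers. The first-order inequality $\phi(\bracket{f\cdot(d-\pi^1(\rho))}_1)\ge0$ for $f\ge0$ comes from Razborov's \S4.3; Corollary~4.6(a), which the paper cites, even gives equality for every $g$.

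With that source, your Step~3 is not a dead end. By \eqref{eq:unit}, $1_{\sigma_1}-d$ is the one-rooted three-vertex non-edge flag, hence nonnegative, and
\[
  \bracket{(1_{\sigma_1}-d)\,(d-\pi^1(\rho))}_1=\rho^2-\bracket{d\cdot d}_1=\tfrac13 S .
\]
Hence $\phi(S)\ge0$, and with $S\le0$ this gives $\phi(S)=0$. Rescaling $d$ by $1/\phi(\rho)$ was unnecessary. The $1_{\sigma_1}$-component of $f$ only contributes a multiple of $\phi(\bracket{d-\pi^1(\rho)}_1)=0$, so you may take its coefficient to be $1$, where nonnegativity holds.
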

\begin{proof}
Let $\pi^1\colon\A^0\to\A^1$ and $\partial_1\colon\A^0\to\A^1$
be Razborov's upward operator and vertex differential, respectively
\cite[\S\S2.3.1 and~4.3]{Razborov2007}.  For $M\in\F^0_\ell$,
$\pi^1(M)$ is the sum of the $1$-flags on $\ell+1$ vertices whose
non-root vertices induce a copy of $M$.  For the hyperedge,
$\partial_1\rho=3(\pi^1(\rho)-d)$, so \cite[Theorem~2.8(a)]{Razborov2007}
gives
\[
  \bracket{d\cdot\partial_1\rho}_1
  =3\bigl(\rho\cdot\bracket{d}_1-\bracket{d\cdot d}_1\bigr)=S .
\]
Razborov calls a theory \emph{vertex uniform} if it has exactly one
one-vertex type \cite[Definition~11]{Razborov2007}.  This condition
holds for $\mathcal{T}$, since the only $3$-graph on a single vertex has no
edges.  We can therefore apply the stationarity criterion of Razborov
\cite[Corollary~4.6(a)]{Razborov2007}.  At an edge-density maximizer
$\phi_0$, take $M_1=\rho$, the objective $f(t)=-t$ on $\R$, and
$g=d$.  The choice $f(t)=-t$ makes $\phi_0$ a minimizer of the
objective in that corollary.  It gives
$\phi_0(\bracket{d\cdot\partial_1\rho}_1)=0$, hence $\phi_0(S)=0$.
\end{proof}

\section{Proof of \texorpdfstring{\Cref{thm:main}}{Theorem 1.1}}\label{sec:cert}

We first give a general criterion for certifying upper bounds on
the tetrahedron Tur\'an density.  We then present and verify an exact
seven-vertex certificate that proves \cref{thm:main}.

\subsection{Certificate criterion}\label{sec:bound}

We now introduce the notation needed to describe a certificate.
A \emph{block} $b$ is specified by a type $\sigma_b$ of size
$s_b$ and a list of distinct $\sigma_b$-flags on
$\ell_b\ge s_b$ vertices, up to isomorphism.
This coordinate list is the block's \emph{flag basis}, written as
$\mathbf F_b=(F_{b,1},\ldots,F_{b,d_b})^{\mathsf T}$.
Here $s_b$ is the number of roots.  We call the basis size $d_b$
the \emph{dimension} of block $b$; it is the number of rows
(equivalently, columns) of the associated matrix $Q_b$ below.

To each block $b$, we associate a positive semidefinite matrix
$Q_b\in\R^{d_b\times d_b}$ whose rows and columns are indexed
by its flag basis.  We write $Q_b\PSD$ to denote positive
semidefiniteness.  This matrix defines the quadratic form
$\mathbf F_b^{\mathsf T}Q_b\mathbf F_b\in\A^{\sigma_b}$.
Each product $F_{b,i}F_{b,j}$ appearing when this quadratic form
is multiplied out can be expanded using flags on $2\ell_b-s_b$
vertices by \cref{sec:algebra}.

Fix a real bound $B\in\R$.  Choose finitely many blocks with
their matrices $Q_b$, a real number $\tau$, and an \emph{expansion size}
$m\in\N$ with $2\ell_b-s_b\le m$ for every block $b$.
For convenience, we also take $m\ge6$.
The element $S$ in \eqref{eq:statelt} admits an expansion on six
vertices, and hence on $m$ vertices.
We can therefore expand all contributions using $3$-graphs on $m$ vertices.
For $G\in\F^0_m$, define the symmetric coefficient matrix $A_b(G)$
for block $b$ by
\begin{equation}\label{eq:coefficient-matrix}
  (A_b(G))_{ij}
  =\frac1{(m)_{s_b}}
    \sum_{\substack{\theta:[s_b]\hookrightarrow V(G)\\
                     \theta\text{ induces }\sigma_b}}
       p(F_{b,i},F_{b,j};(G,\theta)),
  \qquad (m)_s:=\frac{m!}{(m-s)!}.
\end{equation}
This averages over all injections of the roots, with zero
contribution from injections that do not induce $\sigma_b$.
By the definitions of the flag product and the downward operator,
$(A_b(G))_{ij}$ is the coefficient at $G$ in the expansion of
$\bracket{F_{b,i}F_{b,j}}_{\sigma_b}$ on $m$ vertices.
Likewise, the coefficient of $S$ is
\begin{equation}\label{eq:stationarity-coefficient}
  s(G)=3\left(p(\rho,\rho;G)
       -\frac1m\sum_{v\in V(G)}p(d,d;(G,v))\right).
\end{equation}
Writing $\langle Q,A\rangle=\sum_{i,j}Q_{ij}A_{ij}$, the full
coefficient at $G$ is therefore
\begin{equation}\label{eq:certificate-coefficient}
  c_G=\frac{|E(G)|}{\binom m3}
      +\sum_b\langle Q_b,A_b(G)\rangle+\tau s(G).
\end{equation}
The three terms are the contributions of the edge density,
the quadratic forms, and the stationarity constraint, respectively.
In particular, the coefficients are affine in the entries of
$Q_b$ and in $\tau$.

These expansions give the following identity in $\A^0$:
\begin{equation}\label{eq:certificate}
  \rho+\sum_b\bracket{\mathbf F_b^{\mathsf T}Q_b\mathbf F_b}_{\sigma_b}+\tau S
  =\sum_{G\in\F^0_m}c_G\,G.
\end{equation}
We call these data a \emph{certificate} for the bound
$\pi(\Kfour)\le B$ if $c_G\le B$ for every $G\in\F^0_m$.
The coefficient $\tau$ of $S$ is called the
\emph{stationarity multiplier}.

\begin{proposition}\label{prop:certificate-bound}
If $Q_b\PSD$ for every block and the coefficients in
\eqref{eq:certificate-coefficient} satisfy $c_G\le B$ for every
$G\in\F^0_m$, then $\pi(\Kfour)\le B$.
\end{proposition}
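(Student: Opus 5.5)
The plan is to evaluate the identity \eqref{eq:certificate} at an extremal positive homomorphism. By \cref{prop:attained}, choose $\phi_0\in\Hom^+(\A^0,\R)$ with $\phi_0(\rho)=\pi(\Kfour)$. By \cref{prop:maxstat}, this maximizer is degree-stationary, so $\phi_0(S)=0$. This is the essential reason the sign of $\tau$ plays no role: the stationarity term vanishes at $\phi_0$ for any real multiplier.

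Next I will show that each block term is nonnegative at $\phi_0$. Since $Q_b\PSD$, write $Q_b=\sum_r u_ru_r^{\mathsf T}$, for instance via a spectral decomposition. Then $\mathbf F_b^{\mathsf T}Q_b\mathbf F_b=\sum_r(u_r^{\mathsf T}\mathbf F_b)^2$ is a sum of squares in $\A^{\sigma_b}$, hence nonnegative. By \cref{thm:downward}, as recorded in the display following it, $\bracket{\mathbf F_b^{\mathsf T}Q_b\mathbf F_b}_{\sigma_b}\ge0$ in $\A^0$. In particular, $\phi_0$ of each such term is $\ge0$.

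Now apply $\phi_0$ to \eqref{eq:certificate}. Linearity gives
\[
\pi(\Kfour)=\phi_0(\rho)\le\phi_0(\rho)+\sum_b\phi_0\bigl(\bracket{\mathbf F_b^{\mathsf T}Q_b\mathbf F_b}_{\sigma_b}\bigr)+\tau\phi_0(S)=\phi_0\Bigl(\sum_{G\in\F^0_m}c_G\,G\Bigr).
\]
By \cref{lem:coeffbound}, the right-hand side is at most $\max_G c_G\le B$.

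The only real obstacle is justifying the identity \eqref{eq:certificate} itself, namely that the coefficients $c_G$ are correct. I plan to check this term by term in $\A^0$. First, $\rho$ expands on $m$ vertices with coefficient $p(\rho;G)=|E(G)|/\binom m3$. Second, the product $F_{b,i}F_{b,j}$ may be expanded on $m\ge 2\ell_b-s_b$ vertices with coefficients $p(F_{b,i},F_{b,j};\cdot)$, using the footnote in \cref{sec:algebra}. Applying $\bracket{\cdot}_{\sigma_b}$ turns $q_{\sigma_b}(F)\,p(F_{b,i},F_{b,j};F)$, summed over the $\sigma_b$-flags $F$ with underlying graph $G$, into the average over injections in \eqref{eq:coefficient-matrix}. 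The key point is that $q_{\sigma_b}(F)$ is exactly the fraction of injections $\theta$ with $(G,\theta)\cong F$. Third, $S=3(\rho\cdot\rho-\bracket{d\cdot d}_1)$ expands in the same way on $m\ge6$ vertices, giving \eqref{eq:stationarity-coefficient}, where the $\frac1m$ average over roots plays the role of $q_{\sigma_1}$.

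Since the paper states \eqref{eq:certificate} as a consequence of the definitions, I will cite it and only briefly verify the $q_\sigma$ bookkeeping.
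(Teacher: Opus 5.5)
Your proposal is correct and follows essentially the same route as the paper: take a maximizer from \cref{prop:attained}, use \cref{prop:maxstat} to make the $\tau S$ term vanish for any real $\tau$, use \cref{thm:downward} to make each block term nonnegative, and finish with \cref{lem:coeffbound}. Your extra check of the $q_\sigma$ bookkeeping behind \eqref{eq:certificate} is accurate; the paper does not re-derive that identity in this proof and treats it as already established in the text just before the proposition.
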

\begin{proof}
By \cref{prop:attained}, choose a maximizer $\phi$ with
$\phi(\rho)=\pi(\Kfour)$.  By \cref{prop:maxstat},
$\phi(\tau S)=0$ for every $\tau\in\R$, while each quadratic-form
contribution is nonnegative by \cref{thm:downward}.  Applying $\phi$ to
\eqref{eq:certificate} and using \cref{lem:coeffbound} gives
\[
  \pi(\Kfour)=\phi(\rho)
  \le\phi\!\left(\rho+
    \sum_b\bracket{\mathbf F_b^{\mathsf T}Q_b\mathbf F_b}_{\sigma_b}
    +\tau S\right)
  \le\max_{G\in\F^0_m}c_G\le B.\qedhere
\]
\end{proof}

We call such a certificate \emph{exact} when $B,\tau\in\Q$ and
each matrix is supplied as a finite sum
\[
  Q_b=\sum_{r=1}^{k_b}u_{b,r}u_{b,r}^{\mathsf T},
  \qquad u_{b,r}\in\Q^{d_b}.
\]
We call the vectors $u_{b,r}$ in this decomposition
\emph{factor vectors} of $Q_b$ and their number $k_b$ the
\emph{factor count} of the supplied decomposition.
This rational representation enables exact computer checks and
formal verification in Lean.

\subsection{The seven-vertex certificate}\label{sec:seven-certificate}

We now apply \cref{prop:certificate-bound} with expansion size $m=7$.
We specify the flag families and rational matrices, then verify
the resulting coefficient bound.
We use $23$ blocks:
one for $(s_b,\ell_b)=(1,4)$,
two each for $(3,5)$ and $(4,5)$, and $18$ for $(5,6)$.
For each block, the flag basis $\mathbf F_b$ consists of all
flags of its type and size.  The types, dimensions
$d_b$, and factor counts $k_b$, as defined in
\cref{sec:bound}, are summarized in \cref{tab:certificateblocks}.

\begin{table}[t]
\centering
\caption{Blocks of the exact certificate.
The types with at most four roots are shown in \cref{fig:types}.
The first column gives the number of vertices in each flag product.
The last row combines the $18$ blocks in \cref{tab:fivefamilies},
giving the ranges of their dimensions and factor counts.}
\label{tab:certificateblocks}
\small
\begin{tabular}{@{}ccccrr@{}}
\toprule
$2\ell_b-s_b$ & type size $s_b$ & flag size $\ell_b$ & type & dimension $d_b$ & factor count $k_b$\\
\midrule
6 & 4 & 5 & $\sigma_4^1$ & 56 & 1\\
6 & 4 & 5 & $\sigma_4^2$ & 50 & 2\\
7 & 1 & 4 & $\sigma_1$ & 7 & 7\\
7 & 3 & 5 & $\sigma_3^{\bar E}$ & 236 & 236\\
7 & 3 & 5 & $\sigma_3^{E}$ & 191 & 185\\
7 & 5 & 6 & five-root types ($18$ blocks) & $483$--$1024$ & $1$--$37$\\
\bottomrule
\end{tabular}
\end{table}

A six-vertex flag of a five-root type is obtained by adding one
non-root vertex $w$ and choosing which of the ten possible edges
$\{i,j,w\}$ to include, one for each pair of roots.
For every edge $\{i,j,k\}$ of the type, the three edges
$\{i,j,w\},\{i,k,w\},\{j,k,w\}$ cannot all be included,
since together with $\{i,j,k\}$ they would form a tetrahedron.
These are the only restrictions on the new edges.
Since an isomorphism must fix every root and the unique non-root
vertex, different allowed edge sets give different flags.
In particular, the empty five-root type has $2^{10}=1024$ flags.
The $18$ five-root types used in the certificate, together with
their dimensions and factor counts, are listed in
\cref{tab:fivefamilies} in Appendix~\ref{app:five-block-data}.

For each block $b$, the matrix $Q_b$ is specified by integer
vectors $a_{b,r}\in\mathbb Z^{d_b}$ with a common denominator
$M=8\cdot10^6$:
\[
  Q_b=\frac{1}{M^2}\sum_{r=1}^{k_b}a_{b,r}a_{b,r}^{\mathsf T}.
\]
Here $u_{b,r}=a_{b,r}/M$; we call the scaled vectors $a_{b,r}$
the \emph{integer factor vectors}.  The factor representation gives
a compact specification of the matrices and makes their positive
semidefiniteness explicit.  The total factor count is $\sum_b k_b=780$.
The integer factor vectors contain $326{,}557$ entries in total.
These vectors and the flag bases are supplied in the accompanying
data (see \nameref{sec:reproduction}).  The stationarity multiplier is
\[
  \tau=\frac{710104654129914}{64000000000000}\approx11.09539.
\]
Every flag product in \cref{tab:certificateblocks} uses at most
seven vertices, and $S$ can be expanded on any $m\ge6$ vertices.
For $G\in\F^0_7$, let $A_b(G)$ and $s(G)$ be given by
\eqref{eq:coefficient-matrix} and \eqref{eq:stationarity-coefficient}
with expansion size $m=7$.
Substituting the integer factor vectors into
\eqref{eq:certificate-coefficient} gives the explicit coefficient
\begin{equation}\label{eq:ansatz}
  c_G=\frac{|E(G)|}{35}
      +\frac1{M^2}\sum_b\sum_{r=1}^{k_b}
         a_{b,r}^{\mathsf T}A_b(G)a_{b,r}
      +\tau s(G).
\end{equation}
Thus, \eqref{eq:certificate} holds with $m=7$ and these coefficients.
Explicit integer counting formulas for these coefficients are
given in Appendix~\ref{sec:coeff}.
Their exhaustive verification is described in
Appendix~\ref{app:exact-verification}.
The following proposition gives the exact bound on every $c_G$.

\begin{proposition}\label{prop:seven-certificate}
For the blocks, integer factor vectors, and stationarity multiplier
specified above, set
\[
  L=5040M^2=322{,}560{,}000{,}000{,}000{,}000,
  \qquad C_7(G)=L\,c_G,
\]
where $c_G$ is given by \eqref{eq:ansatz}.
Then, $C_7(G)\in\mathbb Z$ for every $G\in\F^0_7$, and
\begin{equation}\label{eq:maxcol}
  \max_{G\in\F^0_7}C_7(G)=179{,}920{,}412{,}317{,}534{,}044.
\end{equation}
Consequently, these data form an exact certificate with
\[
  \max_{G\in\F^0_7}c_G=\Bexact=B.
\]
\end{proposition}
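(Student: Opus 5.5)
The proof is a finite exact computation, organized so that integrality and the maximum can both be checked by exhaustive enumeration. First I enumerate $\F^0_7$, the tetrahedron-free $3$-graphs on seven vertices up to isomorphism. I fix a canonical labeling so each $G$ is handled once, and I cross-check the count against an independent enumeration. For each $G$ and each block $b$, I compute the unnormalized counts
\[
  N_b(G)_{ij}=\sum_{\theta\text{ induces }\sigma_b}\#\{(S_1,S_2)\text{ disjoint}: G[S_t\cup\theta([s_b])]\cong F_{b,t'}\}.
\]
These are nonnegative integers, obtained by iterating over root injections and over ordered pairs of disjoint non-root sets. Similarly I compute integer counts for $p(\rho,\rho;G)$ and for $\sum_v p(d,d;(G,v))$.

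The integrality claim comes from tracking denominators. Here $p(F_{b,i},F_{b,j};(G,\theta))$ is $N$ divided by the number of ordered disjoint set pairs. That number is $\binom{7-s_b}{\ell_b-s_b}\binom{7-\ell_b}{\ell_b-s_b}$ when $2\ell_b-s_b=7$, and it carries an extra factor when $2\ell_b-s_b=6$ (the $(4,5)$ blocks, which leave one vertex spare). Multiplying by the prefactor $1/(7)_{s_b}$, I check case by case for $(s_b,\ell_b)\in\{(1,4),(3,5),(4,5),(5,6)\}$ that the total denominator divides $5040=7!$.

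The remaining terms are handled the same way. The edge term $|E(G)|/35$ has $35\mid5040$. For the stationarity term, $3p(\rho,\rho;G)$ has denominator dividing $\binom73\binom43\cdot(\text{constant})$, and I check this divides $5040$ after the factor $3$. The $\frac17\sum_v p(d,d;(G,v))$ part has denominator dividing $7\cdot\binom62\binom42$. The multiplier $\tau$ has denominator $64\cdot10^{12}$, and $M^2=64\cdot10^{12}$, so $L\tau s(G)=5040\cdot710104654129914\cdot s(G)$ is integral once $5040\,s(G)\in\mathbb Z$. For the quadratic part, $L\cdot M^{-2}\,a^{\mathsf T}A_b(G)a=a^{\mathsf T}(5040A_b(G))a\in\mathbb Z$ because $a$ is an integer vector. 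These denominator computations are what the explicit integer formulas of Appendix~\ref{sec:coeff} record. Combining the three terms gives $C_7(G)\in\mathbb Z$.

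Next I evaluate $C_7(G)$ exactly in big-integer arithmetic for every $G$, take the maximum, and confirm it equals the stated value $179{,}920{,}412{,}317{,}534{,}044$. The consequence is then arithmetic: dividing by $L=322{,}560{,}000{,}000{,}000{,}000$ and reducing gives $\Bexact$. Since each $Q_b=\frac1{M^2}\sum_r a_{b,r}a_{b,r}^{\mathsf T}$ is PSD by construction, the data satisfy the hypotheses of \cref{prop:certificate-bound} with $B=\Bexact$.

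The main obstacle is computational correctness at scale. There are many thousands of seven-vertex graphs, blocks of dimension up to $1024$, and $18$ five-root types, each with up to $(7)_5=2520$ injections. The danger is a mis-indexed flag basis or a wrong isomorphism test, not the arithmetic itself. I would handle this in three ways. First, identify each six-vertex five-root flag directly by its edge pattern on $\{i,j,w\}$, which is canonical since all vertices are fixed. Second, compute $a^{\mathsf T}N_b(G)a$ sparsely by accumulating $(a_{b,r})_i(a_{b,r})_j$ over the realized pairs, without forming the matrices. Third, validate everything independently in Lean 4, as the paper does.
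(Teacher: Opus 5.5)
Your proposal is correct and follows the paper's proof. Positive semidefiniteness comes from the rational factorization, and integrality holds because the equally weighted choices number $140,1260,5040,5040$ for the $(1,4),(3,5),(4,5),(5,6)$ blocks and $140,630$ for the two stationarity averages, all dividing $5040$, together with $35\mid5040$ and $M^2\tau\in\mathbb Z$; the maximum then comes from exhaustive exact evaluation. The only difference is the enumeration: the paper scans all tetrahedron-free one-vertex extensions of the $964$ six-vertex representatives, which needs no canonical labeling and gives a proved completeness guarantee (deleting any vertex lands in the catalog, and duplicates cannot change a maximum), whereas your canonical enumeration checked by a count comparison does not by itself prove that no class is missed.
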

\begin{proof}
The factorization of each $Q_b$ is a sum of outer products of
rational vectors, so $Q_b\PSD$.
For the block sizes $(s_b,\ell_b)=(1,4),(3,5),(4,5),(5,6)$,
the average in \eqref{eq:coefficient-matrix} runs over, respectively,
$140$, $1260$, $5040$, and $5040$ equally weighted choices of
root-label assignments and ordered pairs of disjoint non-root sets.
All four numbers divide
$5040$.  The two averages in \eqref{eq:stationarity-coefficient}
have respectively $140$ and $630$ equally likely choices, which
also divide $5040$.  Since the edge-density denominator is $35$
and $M^2\tau$ is an integer, \eqref{eq:ansatz} implies
$C_7(G)\in\mathbb Z$.

To verify \eqref{eq:maxcol}, we take one representative of each of
the $964$ tetrahedron-free $3$-graphs on six vertices, add one
vertex, and consider all $2^{15}$ choices of edges containing it.
Retaining the tetrahedron-free extensions gives a list covering
every member of $\F^0_7$ up to isomorphism: deleting any vertex
from such a $3$-graph leaves a member of the six-vertex catalog.
Repeated isomorphism classes do not affect the maximum.
The exhaustive integer evaluations detailed in
Appendix~\ref{app:exact-verification} establish \eqref{eq:maxcol}
on this list; the counting formulas are given in
Appendix~\ref{sec:coeff}.
Dividing the maximum by $L$ gives $B=\Bexact$.
Together with the rational factorizations, this proves that the
data form the claimed exact certificate.
\end{proof}

Applying \cref{prop:certificate-bound} now proves \cref{thm:main}.

\paragraph{Formalization in Lean.}
We also use this certificate in a Lean formalization of the full
proof of \cref{thm:main}.  The formalization uses Lean~4
\cite{Lean4} and mathlib \cite{mathlib} and extends the
flag-algebra library for simple graphs developed in
\cite{FlagCompiler} to $3$-graphs.
The formal proof does not rely on
the results of the external verifiers and contains no omitted
proofs or additional user-declared axioms.  Most of the proofs
are checked directly by Lean's kernel, its core proof checker,
using Lean's standard axioms.  Finite computations, including
checks of the catalogs, symmetry tables, and inequalities
$c_G\le B$ for every $G\in\F^0_7$, use compiled Lean code for
efficiency.  These computational checks also require trust in Lean's
compiler.

\section{Optimization for certificate search}\label{sec:search}

This section explains how we found the certificate in
\cref{sec:cert}.  For fixed blocks, we seek positive
semidefinite matrices $Q_b$ and a stationarity multiplier $\tau$
that make the bound $B$ as
small as possible.  This leads to a semidefinite program involving
large matrices $Q_b$ and a coefficient inequality $c_G\le B$ for every tetrahedron-free
seven-vertex $3$-graph $G$.

Our algorithm solves a sequence of smaller linear
programs, using cutting planes to add constraints and column
generation to add variables as needed.  This process requires
repeated coefficient evaluations, which we accelerate by reusing
computations on six-vertex subgraphs and exploiting type symmetries.

\subsection{Semidefinite program}\label{sec:fullmodel}\label{sec:program}

We use expansion size $m=7$ and fix a family of blocks whose
flag products use at most seven vertices.
The coefficient matrices $A_b(G)$ and stationarity coefficients
$s(G)$ are those defined in \cref{sec:bound}.
Let $\mathcal{Q}=(Q_b)_b$ denote the collection of positive
semidefinite matrices, indexed by the blocks $b$.
We write $c_G(\mathcal{Q},\tau)$ for the coefficient in
\eqref{eq:certificate-coefficient} to make its dependence on the
optimization variables explicit.

We seek to minimize the largest of these coefficients.  Since
$c_G(\mathcal{Q},\tau)$ is affine in the matrix entries and $\tau$, this is
the semidefinite program
\begin{equation}\label{eq:sdp}
\begin{array}{ll}
\text{minimize} & B\\[2mm]
\text{subject to}
 & Q_b\PSD\quad(\text{every block }b),
   \qquad B,\tau\in\R,\\[1mm]
 & c_G(\mathcal{Q},\tau)\;\le\;B\qquad(G\in\F^0_7).
\end{array}
\end{equation}
The coefficient $\tau$ may be any real number, since the
stationarity condition $\phi(S)=0$ implies $\phi(\tau S)=0$
regardless of the sign of $\tau$.
Every feasible solution gives an upper bound on
$\pi(\Kfour)$ by \cref{prop:certificate-bound}; optimality is not
required.

For the search that produced the certificate in \cref{sec:cert},
we chose one representative of each type up to relabeling and
included a block for each representative of size $s$ and every
flag size $\ell>s$ satisfying $2\ell-s\in\{6,7\}$.
Each block uses all flags of its type and size as its
flag basis.  These $32$ blocks are listed in \cref{tab:fullfamilies}.
Only $23$ of these $32$ blocks have nonzero matrices in the final
certificate described in \cref{sec:cert}.

\begin{table}[t]
\centering
\caption{Blocks whose flag products use six or seven vertices,
with $\ell>s$.
The column $2\ell-s$ gives the number of vertices in each flag product.
Dimensions count flags up to isomorphism and
give the side lengths of the PSD matrices.}
\label{tab:fullfamilies}
\small
\begin{tabular}{@{}ccccc@{}}
\toprule
type size $s$ & flag size $\ell$ & $2\ell-s$ & type count & dimensions\\
\midrule
0 & 3 & 6 & 1 & 2\\
1 & 4 & 7 & 1 & 7\\
2 & 4 & 6 & 1 & 11\\
3 & 5 & 7 & 2 & $236,191$\\
4 & 5 & 6 & 4 & $64,56,50,45$\\
5 & 6 & 7 & 23 & $483$--$1024$\\
\bottomrule
\end{tabular}
\end{table}

\subsection{Cutting planes and column generation}\label{sec:implicit}

The SDP combines two computational challenges: PSD matrices of
size up to $1024\times1024$, whose independent entries are optimization
variables, and the inequalities $c_G(\mathcal{Q},\tau)\le B$ for all
$G\in\F^0_7$, where $|\F^0_7|=1{,}295{,}600$.
Directly optimizing over these large PSD blocks requires substantial
matrix computations, while including all these inequalities in the
optimization problem requires storing and processing a large amount
of coefficient data.

We apply the established techniques of cutting planes \cite{Kelley1960}
and column generation \cite{GilmoreGomory1961} to solve this SDP
through a sequence of smaller LPs.
To reduce the number of optimization variables, we express each $Q_b$ as
a nonnegative combination of selected matrices of the form
$vv^{\mathsf T}$, following the LP-based column-generation framework
of Ahmadi, Dash, and Hall \cite[\S3.1]{AhmadiDashHall2017}.
This guarantees positive semidefiniteness and allows the weights to
be optimized by linear programming.
We also enforce $c_G(\mathcal{Q},\tau)\le B$ only for $G$ in a subset of
$\F^0_7$ and gradually enlarge this subset as the search proceeds.
Our contribution here is the implementation of these techniques for
the seven-vertex flag-algebra SDP, with coefficient evaluation adapted
to tetrahedron-free $3$-graphs as described in \cref{sec:evaluation}.

More precisely, we choose a finite set of vectors
$V_b\subset\R^{d_b}$ for each block $b$ and a subset
$W\subseteq\F^0_7$.
Let $\lambda$ denote the collection of nonnegative weights
$\lambda_{b,v}$ over all blocks $b$ and vectors $v\in V_b$.
We parametrize each $Q_b$ as
\begin{equation}\label{eq:matrix-atoms}
 Q_b(\lambda)=\sum_{v\in V_b}\lambda_{b,v}vv^{\mathsf T},
 \qquad \lambda_{b,v}\ge0.
\end{equation}
This parametrization uses only the $|V_b|$ weights $\lambda_{b,v}$
as optimization variables for block $b$.
We write $\mathcal{Q}(\lambda)=(Q_b(\lambda))_b$ for the resulting
collection of matrices.

Combining the restrictions to $W$ and $V_b$ gives the
\emph{restricted LP} that we solve at each iteration:
\begin{equation}\label{eq:restricted-certificate}
\begin{array}{ll}
\text{minimize} & B\\[1mm]
\text{subject to}
 & \lambda_{b,v}\ge0\quad(v\in V_b,\ \text{every }b),
   \qquad B,\tau\in\R,\\[1mm]
 & c_G(\mathcal{Q}(\lambda),\tau)\le B\qquad(G\in W).
\end{array}
\end{equation}
The matrices $Q_b(\lambda)$ produced by the restricted LP are PSD by
\eqref{eq:matrix-atoms}.  However, the restricted LP solution may violate
$c_G(\mathcal{Q}(\lambda),\tau)\le B$ for some $G\in\F^0_7\setminus W$, so it need not
be feasible for the original SDP \eqref{eq:sdp}.
After each LP solve, we therefore evaluate the coefficients for all
tetrahedron-free seven-vertex $3$-graphs and compute
\[
 U=\max_{G\in\F^0_7}c_G(\mathcal{Q}(\lambda),\tau).
\]
Since $c_G(\mathcal{Q}(\lambda),\tau)\le U$ for every $G\in\F^0_7$,
replacing $B$ by $U$ gives a feasible solution
$(U,\mathcal{Q}(\lambda),\tau)$ to the original SDP in exact
arithmetic.  During the search, $U$ is evaluated in floating-point
arithmetic; the subsequent exact conversion produces the rigorous bound.

Our algorithm seeks to reduce $U$ by enlarging $W$ to enforce
more coefficient inequalities, or by enlarging the sets $V_b$ to
allow more choices for the matrices $Q_b$.
After each update, we solve the resulting restricted LP and
recompute $U$.  At termination, the algorithm returns the smallest
$U$ seen during the search together with the corresponding pair
$(\mathcal{Q},\tau)$.  The returned matrices and multiplier are
then converted into an exact certificate as described in
Appendix~\ref{app:exact-construction}.
The two strategies for enlarging $W$ and the sets $V_b$ are
described below; the full procedure is given in
Appendix~\ref{app:optimization-procedure}.

\paragraph{Adding violated constraints (cutting planes).}
We enlarge $W$ using a tolerance of $10^{-7}$:
if $U-B>10^{-7}$, we add selected $3$-graphs $G\notin W$ with
$c_G(\mathcal{Q}(\lambda),\tau)>B+10^{-7}$ to $W$, so that the corresponding
constraints are enforced in the next LP solve.

\paragraph{Adding vectors (column generation).}
We enlarge the sets $V_b$ to allow more choices for the matrices
$Q_b$.  Adding a vector $u\notin V_b$ to $V_b$ introduces a new
variable $\lambda_{b,u}\ge0$ and the corresponding matrix term
$\lambda_{b,u}uu^{\mathsf T}$ in \eqref{eq:matrix-atoms}.

To select the new vectors, we use additional information returned
by the LP solver. Along with the restricted LP solution, the solver
returns weights $y_G$, called \emph{dual multipliers}, associated
with the constraints $c_G(\mathcal{Q}(\lambda),\tau)\le B$ for $G\in W$.
For an optimal LP solution in exact arithmetic, these weights are
nonnegative and satisfy
$\sum_{G\in W}y_G=1$.
Multiplying the constraints by the weights $y_G$ and adding gives
\[
 \sum_{G\in W}y_Gc_G(\mathcal{Q}(\lambda),\tau)\le B.
\]
The solver chooses the weights $y_G$ so that the term involving
$\tau$ in this sum vanishes and every existing variable
$\lambda_{b,v}$ has a nonnegative coefficient.
Since $\lambda_{b,v}\ge0$, this implies
$B\ge\sum_{G\in W}y_G|E(G)|/35$.
This lower bound equals the optimal value of the restricted LP,
explaining why it cannot attain a smaller $B$ with its current
variables.
Appendix~\ref{app:dual-formulations} describes the dual LP used to
compute the weights $y_G$ and recover the certificate variables.

We look for vectors $u\notin V_b$ whose addition would introduce
the new variable $\lambda_{b,u}$ with a negative coefficient in the
weighted sum $\sum_{G\in W}y_Gc_G(\mathcal{Q}(\lambda),\tau)$.
Adding the matrix term $\lambda_{b,u}uu^{\mathsf T}$ adds
$\lambda_{b,u}u^{\mathsf T}A_b(G)u$ to each coefficient $c_G$.
Keeping the current weights $y_G$ fixed, the coefficient of the new
variable in this weighted sum is
\[
 \sum_{G\in W}y_G\,u^{\mathsf T}A_b(G)u
   =u^{\mathsf T}M_b(y)u,
 \qquad M_b(y)=\sum_{G\in W}y_GA_b(G).
\]
If this coefficient is negative, the same weights no longer
establish the current lower bound when the new variable is
included, so adding the term may allow a smaller $B$ after
reoptimization.

For each block, the minimum of $u^{\mathsf T}M_b(y)u$ over unit
vectors $u$ is the smallest eigenvalue of $M_b(y)$, attained by a
corresponding unit eigenvector.  We therefore compute the smallest
eigenvalues and corresponding unit eigenvectors of every $M_b(y)$.
Across all blocks, we select up to $80$ vectors corresponding to
the most negative eigenvalues, requiring each selected eigenvalue
to be below $-10^{-7}$.  We add each selected vector $u$ to its
corresponding set $V_b$ and solve the LP again, optimizing the
weights $\lambda_{b,u}$ of the new terms together with all existing
variables.

\subsection{Efficient coefficient evaluation}\label{sec:evaluation}

The search repeatedly evaluates $c_G(\mathcal{Q},\tau)$ for every
seven-vertex $3$-graph $G\in\F^0_7$.  The two reductions below
make these scans less expensive by reusing calculations across
$3$-graphs and across different labelings of the same roots.

\paragraph{Reusing six-vertex coefficients.}
The edge-density and stationarity terms, together with the six
blocks whose flag products use six vertices, can all be
expanded on six vertices.  For fixed $(\mathcal{Q},\tau)$, let $h_H$ be their
combined coefficient at $H\in\F^0_6$ in this expansion.
By the expansion relations \eqref{eq:quotient-generators}, their
combined contribution to $c_G(\mathcal{Q},\tau)$ is
\[
 \sum_{H\in\F^0_6}h_Hp(H,G)
 =\frac1{7}\sum_{v\in V(G)}h_{G-v},
\]
where $G-v$ is obtained by deleting vertex $v$ from $G$.
There are only $964$ $3$-graphs in $\F^0_6$, so we compute the
coefficients $h_H$ once for each $(\mathcal{Q},\tau)$ and reuse them
when evaluating $c_G$ for every $G\in\F^0_7$.
Evaluating this part of $c_G$ then requires just seven table lookups.

We use the same strategy of reusing six-vertex computations to
reduce the cost of forming $M_b(y)$ for these six blocks.
Let $A_b^{(6)}(H)$ be the matrix whose $(i,j)$ entry is the coefficient at
$H\in\F^0_6$ of $\bracket{F_{b,i}F_{b,j}}_{\sigma_b}$,
expanded on six vertices.  The same averaging applies entry by
entry to these coefficient matrices:
\[
 A_b(G)=\frac1{7}\sum_{u\in V(G)}A_b^{(6)}(G-u).
\]
We first combine the weights $y_G$ according to the six-vertex
$3$-graphs obtained by deletion.  For $H\in\F^0_6$, define
\[
 x_H=\frac1{7}\sum_{G\in W}y_G
       \bigl|\{u\in V(G):G-u\cong H\}\bigr|.
\]
Thus, $x_H$ is the total weight assigned to $H$ when each $y_G$
is distributed equally among the seven vertex deletions of $G$.
Regrouping the sum defining $M_b(y)$ by the resulting $3$-graph
$H$ gives
\[
 M_b(y)=\sum_{G\in W}y_GA_b(G)
       =\sum_{G \in W}y_G \cdot \frac1{7}\sum_{u\in V(G)}A_b^{(6)}(G-u)
       =\sum_{H\in\F^0_6}x_HA_b^{(6)}(H).
\]
We compute the $964$ scalar weights $x_H$ once for a given $y$
and reuse them for all six blocks.  Each $M_b(y)$ can then be
assembled from just $964$ six-vertex coefficient matrices.

\paragraph{Using type symmetries.}
For a five-root block $b$ of type $\sigma$, we use type symmetries
to evaluate its contribution $\langle Q_b,A_b(G)\rangle$ to $c_G$
more efficiently.
For $G\in\F^0_7$ and a root labeling $\theta:[5]\hookrightarrow V(G)$
that induces $\sigma$, let $x,y$ be the two vertices in
$V(G)\setminus\theta([5])$.  Let $i(\theta,x)$ and $i(\theta,y)$
be the indices in the flag basis $\mathbf F_b$ of block $b$ such that
\[
\begin{aligned}
 F_{b,i(\theta,x)}&\cong\bigl(G[\theta([5])\cup\{x\}],\theta\bigr),\\
 F_{b,i(\theta,y)}&\cong\bigl(G[\theta([5])\cup\{y\}],\theta\bigr).
\end{aligned}
\]
Expanding the block contribution using the flag product and
downward operator gives
\[
\begin{aligned}
 \langle Q_b,A_b(G)\rangle
 &=\frac1{2520}
 \sum_{\substack{\theta:[5]\hookrightarrow V(G)\\
                  \theta\text{ induces }\sigma}}
 \sum_{i,j}(Q_b)_{ij}\,p(F_{b,i},F_{b,j};(G,\theta))\\
 &=\frac1{2520}
 \sum_{\substack{\theta:[5]\hookrightarrow V(G)\\
                  \theta\text{ induces }\sigma}}
 \Bigl((Q_b)_{i(\theta,x),i(\theta,y)}\cdot\frac12
       +(Q_b)_{i(\theta,y),i(\theta,x)}\cdot\frac12\Bigr)\\
 &=\frac1{2520}
 \underbrace{
 \sum_{\substack{\theta:[5]\hookrightarrow V(G)\\
                  \theta\text{ induces }\sigma}}
 (Q_b)_{i(\theta,x),i(\theta,y)}
 }_{T_\sigma(G;Q_b)}.
\end{aligned}
\]
The first equality averages over all $2520=\binom75\,5!$
injections from $[5]$ to $V(G)$, with zero contribution from those
that do not induce $\sigma$.  The last equality uses the symmetry
of $Q_b$.  We write $T_\sigma(G;Q_b)$ for the final sum over labelings.

To evaluate $T_\sigma(G;Q_b)$, we precompute a matrix $\widehat Q_b$
from $Q_b$ that incorporates all automorphisms of the type $\sigma$.
Each $g\in\operatorname{Aut}(\sigma)$ permutes the flag basis of block $b$.
Write $g\cdot i$ for the index of the flag obtained by relabeling
$F_{b,i}$ according to $g$, and let $P_g$ be the permutation matrix
satisfying $P_ge_i=e_{g\cdot i}$, where $e_i$ is the $i$th coordinate
vector in $\R^{d_b}$.  Define
\[
 \widehat Q_b=\sum_{g\in\operatorname{Aut}(\sigma)}
                    P_g^{\mathsf T}Q_bP_g.
\]
This matrix has the same dimensions as $Q_b$ and depends only on
$Q_b$ and $\sigma$.

\begin{proposition}\label{prop:type-symmetry}
Let $G\in\F^0_7$.  For each five-vertex set $U\subseteq V(G)$
with $G[U]\cong\sigma$, choose a labeling
$\theta_U:[5]\hookrightarrow V(G)$ with image $U$ that induces
$\sigma$.  Let $\{x_U,y_U\}=V(G)\setminus U$ and set
$i_U=i(\theta_U,x_U)$ and $j_U=i(\theta_U,y_U)$.  Then,
\begin{equation}\label{eq:fiveorbit}
 T_\sigma(G;Q_b)=\sum_{\substack{U\subseteq V(G),\ |U|=5\\G[U]\cong\sigma}}
                  (\widehat Q_b)_{i_U,j_U}.
\end{equation}
Each summand is independent of the choice of $\theta_U$ and of
the ordering of $x_U,y_U$.
\end{proposition}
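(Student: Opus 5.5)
The plan is to regroup the sum defining $T_\sigma(G;Q_b)$ according to the image $U=\theta([5])$. A labeling $\theta$ induces $\sigma$ exactly when $G[U]\cong\sigma$ via $\theta$, so only sets $U$ with $G[U]\cong\sigma$ contribute. For such a $U$, with the fixed reference labeling $\theta_U$, the labelings with image $U$ that induce $\sigma$ are exactly $\theta_U\circ g$ for $g\in\operatorname{Aut}(\sigma)$. Indeed, if $\theta$ induces $\sigma$ with the same image, then $g:=\theta_U^{-1}\circ\theta$ is a permutation of $[5]$ preserving $\sigma$. Conversely, each $\theta_U\circ g$ induces $\sigma$. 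This correspondence is a bijection, so
\[
 T_\sigma(G;Q_b)=\sum_{U}\sum_{g\in\operatorname{Aut}(\sigma)}(Q_b)_{i(\theta_U\circ g,x_U),\,i(\theta_U\circ g,y_U)} .
\]

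The key step is to identify $i(\theta_U\circ g,x)$ with $g^{\pm1}\cdot i(\theta_U,x)$. The flag $(G[U\cup\{x\}],\theta_U\circ g)$ is the flag $(G[U\cup\{x\}],\theta_U)$ with its root labels relabeled by $g$. By the definition of the action of $\operatorname{Aut}(\sigma)$ on the flag basis, its index is $g\cdot i(\theta_U,x)$, or $g^{-1}\cdot i(\theta_U,x)$ depending on the convention. Fixing the convention consistently with $P_ge_i=e_{g\cdot i}$, the summand becomes $(Q_b)_{g\cdot i_U,\,g\cdot j_U}=e_{i_U}^{\mathsf T}P_g^{\mathsf T}Q_bP_ge_{j_U}$. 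If the convention yields $g^{-1}$ instead, note that $g\mapsto g^{-1}$ is a bijection of $\operatorname{Aut}(\sigma)$, so the summation over the group absorbs it. Summing over $g$ then gives $(\widehat Q_b)_{i_U,j_U}$, which is \eqref{eq:fiveorbit}. I expect the bookkeeping of this action convention to be the only delicate point, and the reindexing by $g^{-1}$ resolves it.

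For the independence claims, I will argue as follows. Replacing $\theta_U$ by $\theta_U\circ h$ with $h\in\operatorname{Aut}(\sigma)$ replaces $(i_U,j_U)$ by $(h\cdot i_U,h\cdot j_U)$. We also have $P_h^{\mathsf T}\widehat Q_bP_h=\widehat Q_b$, since $g\mapsto gh$ (or $hg$) permutes the group and $P_{gh}=P_gP_h$ up to the same convention. Hence the entry of $\widehat Q_b$ is unchanged. Swapping $x_U$ and $y_U$ transposes the index pair, and $\widehat Q_b$ is symmetric because each $P_g^{\mathsf T}Q_bP_g$ is symmetric. This completes the plan.
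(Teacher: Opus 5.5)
Your proposal is correct and follows the paper's proof: you regroup the labelings by their image $U$, write each one as $\theta_U\circ g$ with $g\in\operatorname{Aut}(\sigma)$, recognize $\sum_g (Q_b)_{g\cdot i_U,g\cdot j_U}$ as $(\widehat Q_b)_{i_U,j_U}$, and use reindexing of the group sum together with the symmetry of $\widehat Q_b$ for the independence claims. Your explicit treatment of the action convention and of the invariance $P_h^{\mathsf T}\widehat Q_bP_h=\widehat Q_b$ spells out what the paper compresses into ``changing $\theta_U$ only permutes the automorphisms in the sum.''
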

\begin{proof}
Fix one such set $U$.  Every labeling with image $U$ that induces
$\sigma$ differs from $\theta_U$ by a unique automorphism of
$\sigma$.  Hence
\[
 \sum_{\substack{\theta([5])=U\\\theta\text{ induces }\sigma}}
 (Q_b)_{i(\theta,x_U),i(\theta,y_U)}
 =\sum_{g\in\operatorname{Aut}(\sigma)}(Q_b)_{g\cdot i_U,g\cdot j_U}
 =(\widehat Q_b)_{i_U,j_U}.
\]
Summing over all such $U$ gives \eqref{eq:fiveorbit}.
Changing $\theta_U$ only permutes the automorphisms in the sum,
so the value is independent of the chosen labeling.  It is also
unchanged when $x_U$ and $y_U$ are interchanged, since $\widehat Q_b$
is symmetric.
\end{proof}

Once $\widehat Q_b$ is available, \cref{prop:type-symmetry} lets us
evaluate $T_\sigma(G;Q_b)$ for each $G\in\F^0_7$ by examining the
$\binom75=21$ five-vertex subsets $U\subseteq V(G)$, rather than
evaluating the contributions of up to $\binom75\,5!=2520$ injections
$\theta:[5]\hookrightarrow V(G)$ inducing $\sigma$ individually.
We reuse the same $\widehat Q_b$ for all $G\in\F^0_7$.

\section{Computational experiments}\label{sec:experiments}

We evaluated how effectively the algorithm of \cref{sec:search}
uses flag-algebra inequalities to obtain upper bounds on
$\pi(\Kfour)$ within a limited computation time.  The experiments
address two questions: which combination of optimization techniques
works well for these tetrahedron SDPs, and which additional flag
types improve the bounds obtained?  On two seven-vertex SDPs
tested, combining cutting planes and column generation produced
the strongest verified bound obtained from a one-hour search
among the four implementations.

Using cutting planes and column generation together, we compared
the bounds obtained with and without the five-root types.
Including all $23$ types improved the verified bound from
approximately $0.559210$ to $0.558582$ under a one-hour search budget.
A six-hour search with these types reached approximately
$0.557807$, and continuing that search until the convergence tests of
\cref{fig:pipeline} were met produced the bound in \cref{thm:main},
below $0.557789$ (Appendix~\ref{app:long-searches}).
These results show that optimizing over a broader range of flag
types, an established strategy in flag-algebra proofs, is effective
for the tetrahedron problem as well.  They also motivate
investigating our computational approach for other hypergraph
Tur\'an problems.

\subsection{Experimental design}\label{sec:experimentalsetup}

We compared four ways of handling the coefficient inequalities
and the PSD matrices.  The names follow \cref{sec:implicit}:
\texttt{CUT} denotes cutting planes, which add coefficient
inequalities by enlarging $W$, and \texttt{CG} denotes column
generation, which adds vectors to the sets $V_b$ and introduces
their weights as new LP variables.
\begin{itemize}[leftmargin=*,itemsep=3pt]
\item \texttt{SDP-FULL} included all coefficient inequalities
and optimized directly over PSD matrices using an SDP solver.
\item \texttt{LP-CG} included all coefficient inequalities and
used the representation \eqref{eq:matrix-atoms}, adding vectors
to the sets $V_b$ as the search proceeded.
\item \texttt{SDP-CUT} used an SDP solver for the PSD matrices
and generated coefficient inequalities by enlarging $W$.
\item \texttt{LP-CUT-CG} generated both the coefficient
inequalities and the vectors defining the matrices, as in
\cref{sec:implicit}.
\end{itemize}

For these comparisons, we used four flag-algebra optimization
problems for the tetrahedron.  These SDPs differ in the flag types and flag sizes
included and in the expansion size $m$.  All used the edge-density objective
and the degree-stationarity condition $\phi(S)=0$ from
\cref{sec:stationarity}.  The four SDPs were:
\begin{itemize}[leftmargin=*,itemsep=3pt]
\item \texttt{M6} used expansion size $m=6$ and the blocks
whose flag products use at most six vertices.
Its largest PSD matrix has size $64\times64$.
\item \texttt{M7-lift6} kept these blocks and increased the
expansion size to $m=7$, using the averaging formula in
\cref{sec:evaluation}.
\item \texttt{M7-no5} kept $m=7$ and added the block with
$(s,\ell)=(1,4)$ and the two blocks with $(s,\ell)=(3,5)$.
\item \texttt{M7-all5} also used $m=7$ and further added all $23$
five-root blocks with $(s,\ell)=(5,6)$.
\end{itemize}
The last SDP is the one used for the certificate search in
\cref{sec:fullmodel}.
Increasing the expansion size from $m=6$ to $m=7$ increases
the number of coefficient inequalities from $964$ to $1{,}295{,}600$.

We tested \texttt{M6}, \texttt{M7-no5}, and \texttt{M7-all5}
with each of the four optimization methods listed above.
For \texttt{M7-lift6}, we tested only \texttt{LP-CUT-CG}.
We ran each of the $13$ SDP--method combinations three times,
with a one-hour search budget per run.  Reported times and peak
memory measurements are medians of the three runs, with ranges
where shown.
Runs could terminate before the time limit if the numerical
stopping criteria were satisfied: the gap between the global
coefficient maximum and the returned dual objective, and all
violations of dual feasibility, had to be at most $10^{-7}$.
Appendix~\ref{app:experiment-details} gives the precise tests and
specifies the implemented block families, initialization, and
implementation settings.

All trials used an Intel Core i7-14700K with $63.72$~GiB RAM
and a $40$~GiB process-memory cap.  We used HiGHS
\cite{HuangfuHall2018} as the LP solver and SCS
\cite{ODonoghueEtAl2016} as the SDP solver.
Initialization and exact verification were excluded from the
search budget.

The common measure of progress was the smallest value of
$U=\max_{G\in\F^0_m}c_G(\mathcal Q,\tau)$ obtained by evaluating
every coefficient within the search budget, where $m$ is the
expansion size of the SDP.  This maximum gives an upper bound on
$\pi(\Kfour)$ whenever the matrices $Q_b$ are positive semidefinite
(\cref{prop:certificate-bound}), so smaller values indicate stronger
candidate bounds.
In the comparisons below, the plots show floating-point estimates
of $U$, and the tables report bounds obtained by converting the
selected candidates into exact certificates and checking every
coefficient inequality, as described in
Appendix~\ref{app:experiment-details}.  These verified bounds need
not be the optima of the corresponding SDPs.
Reproduction information is given in \nameref{sec:reproduction}.

\begin{table}[t]
\centering
\caption{Comparison of optimization methods on \texttt{M6} (expansion
size $m=6$) with a one-hour search budget per run.
Search times are reported as medians and ranges over three runs,
excluding initialization and exact verification.
Verified bounds are rounded to decimals; each method produced the
same verified bound in all three runs.
The total factor count is $\sum_b k_b$, summed over all blocks.
Both methods using column generation met the stopping criteria
in Appendix~\ref{app:experiment-details};
\texttt{SDP-FULL} reached the SDP solver's iteration limit, and
\texttt{SDP-CUT} reached the one-hour time limit.}
\label{tab:m6methods}
\small
\begin{tabular}{@{}lrrrr@{}}
\toprule
 & \multicolumn{2}{c}{search time (s)} & & \\
\cmidrule(lr){2-3}
method & median & range & verified bound & total factor count\\
\midrule
\texttt{SDP-FULL} & $2129.91$ & $[2089.44,2163.52]$ & $0.561665585236$ & $82$\\
\texttt{LP-CG} & $5.16$ & $[5.06,5.23]$ & $0.561670318397$ & $112$\\
\texttt{SDP-CUT} & $3595.47$ & $[3595.44,3595.47]$ & $0.572865345909$ & $176$\\
\texttt{LP-CUT-CG} & $6.91$ & $[6.89,7.20]$ & $0.561669244574$ & $109$\\
\bottomrule
\end{tabular}
\end{table}

\subsection{Comparison of optimization methods}\label{sec:algorithmcomparison}

We first held the SDP fixed and compared the four methods.
This tests how effectively each implementation searches
the same mathematical space of certificates within the available
time.  The SDP with $m=6$ provides a smaller reference problem for
understanding the results at $m=7$.

\subsubsection{Expansion size \texorpdfstring{$m=6$}{m=6}}\label{sec:m6comparison}

\Cref{tab:m6methods} summarizes the comparison of optimization
methods at expansion size $m=6$.
Both methods using column generation obtained bounds near
$0.56167$ and met the stopping criteria in
Appendix~\ref{app:experiment-details} before the one-hour time limit.
\texttt{LP-CG} took a median of $5.16$ seconds, compared with
$6.91$ seconds for \texttt{LP-CUT-CG}.  Using cutting planes
reduced the number of retained $3$-graphs from $964$ to $305$,
but increased the number of LP iterations from $58$ to
$139$.  Thus, on this small SDP, the reduced LP size did not
compensate for the additional iterations.

\texttt{SDP-FULL} produced a slightly stronger certificate,
approximately $0.5616656$, but took about $2130$ seconds and
reached the SDP solver's iteration limit before meeting the
requested accuracy.
\texttt{SDP-CUT} used almost the full hour but obtained a much
weaker bound.  These outcomes
show that the methods using column generation quickly obtained
bounds close to the best one found on \texttt{M6}; they also show
that cutting planes alone did not resolve the difficulties
encountered by the SDP solver in these runs.

\begin{table}[t]
\centering
\caption{Comparison of optimization methods on \texttt{M7-no5} and
\texttt{M7-all5} (expansion size $m=7$) with a one-hour search budget
per run.  Values are medians of three runs; verified bounds are
rounded to decimals.  All runs reached the one-hour time limit.
The search budget excludes initialization and exact verification.}
\label{tab:sevenmethods}
\small
\begin{tabular}{@{}llrr@{}}
\toprule
SDP & method & verified bound & peak memory usage (GiB)\\
\midrule
\texttt{M7-no5} & \texttt{SDP-FULL} & $0.615533579739$ & $24.17$\\
 & \texttt{LP-CG} & $0.561797610593$ & $15.05$\\
 & \texttt{SDP-CUT} & $0.568804336858$ & $1.16$\\
 & \texttt{LP-CUT-CG} & $0.559209644128$ & $0.99$\\
\addlinespace
\texttt{M7-all5} & \texttt{SDP-FULL} & $0.617248271542$ & $28.73$\\
 & \texttt{LP-CG} & $0.560454252478$ & $15.66$\\
 & \texttt{SDP-CUT} & $0.596480377283$ & $2.90$\\
 & \texttt{LP-CUT-CG} & $0.558581395366$ & $2.12$\\
\bottomrule
\end{tabular}
\end{table}

\begin{figure}[t]
\centering
\includegraphics[width=\textwidth,trim=0 6bp 0 0,clip]{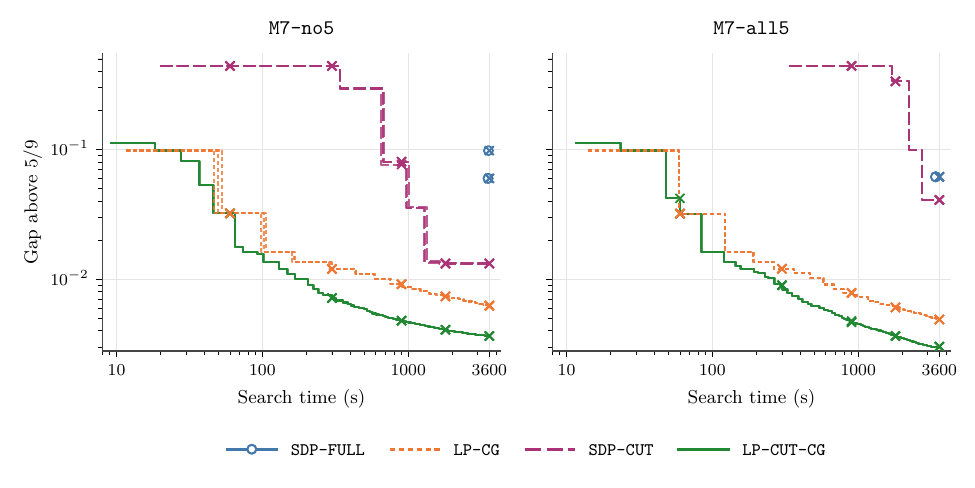}
\caption{Progress of the four methods on \texttt{M7-no5} and
\texttt{M7-all5} (expansion size $m=7$).
Each curve shows the smallest floating-point estimate of $U$ found
so far in one run, plotted as a gap above $5/9$.  Crosses mark
verified bounds for candidates available at the indicated search
budgets; verification time is additional.  Open circles mark the first
available \texttt{SDP-FULL} candidates.  Curves overlap when the
repetitions follow the same trajectory.}
\label{fig:sevenalgorithmconvergence}
\end{figure}

\subsubsection{Expansion size \texorpdfstring{$m=7$}{m=7}}\label{sec:no5algorithms}\label{sec:all5algorithms}

\Cref{tab:sevenmethods} summarizes the one-hour results of the
four optimization methods at expansion size $m=7$, and
\cref{fig:sevenalgorithmconvergence} shows their progress during
the search.
\texttt{SDP-FULL} gave the strongest bound on \texttt{M6}, but the
weakest bound on both \texttt{M7-no5} and \texttt{M7-all5}.
On these larger SDPs, \texttt{LP-CUT-CG}, which combines cutting
planes and column generation, gave the strongest bound within
the one-hour budget.

Cutting planes kept the LPs smaller, allowing more certificate
updates with lower peak memory usage.
On \texttt{M7-all5}, \texttt{LP-CUT-CG} completed about $253$
iterations in one hour, compared with $32$ for \texttt{LP-CG}.
\texttt{LP-CUT-CG} also used less than $1/7$ as much peak memory
as \texttt{LP-CG} ($2.12$ versus $15.66$~GiB).
This computational advantage was reflected in the verified
bound, which improved from approximately $0.560454$ with
\texttt{LP-CG} to $0.558581$ with \texttt{LP-CUT-CG}
(\cref{tab:sevenmethods}).
\Cref{fig:sevenalgorithmconvergence} shows that the advantage
also appeared during the search: the two methods initially
obtained similar bounds, but \texttt{LP-CUT-CG} subsequently
reached lower values of $U$ and maintained stronger bounds on
both SDPs.

The comparison of \texttt{LP-CUT-CG} with \texttt{SDP-CUT}
shows the benefit of column generation.
Both used cutting planes to reduce the number of coefficient
inequalities, but \texttt{LP-CUT-CG} obtained stronger verified
bounds on both \texttt{M7-no5} and \texttt{M7-all5}.
Thus, cutting planes alone were not enough to obtain comparably
strong bounds within the one-hour budget; adding column
generation led to substantially better bounds.

\begin{table}[t]
\centering
\caption{Comparison of the four SDP formulations under \texttt{LP-CUT-CG}
with a one-hour search budget per run.  Each SDP was run three times
and gave the same verified bound in all three runs.
Block counts refer to the implementations described in
Appendix~\ref{app:experiment-details}.
Verified bounds are rounded to decimals.  The total factor count is $\sum_b k_b$,
summed over all blocks.  The search budget
excludes initialization and exact verification.}
\label{tab:hostmodels}
\small
\begin{tabular}{@{}lrrrrr@{}}
\toprule
SDP & expansion size $m$ & block count & largest dimension & verified bound & total factor count\\
\midrule
\texttt{M6} & $6$ & $9$ & $64$ & $0.561669244574$ & $109$\\
\texttt{M7-lift6} & $7$ & $9$ & $64$ & $0.561671088720$ & $108$\\
\texttt{M7-no5} & $7$ & $12$ & $236$ & $0.559209644128$ & $447$\\
\texttt{M7-all5} & $7$ & $35$ & $1024$ & $0.558581395366$ & $511$\\
\bottomrule
\end{tabular}
\end{table}

\begin{figure}[t]
\centering
\includegraphics[width=\textwidth,trim=0 6bp 0 0,clip]{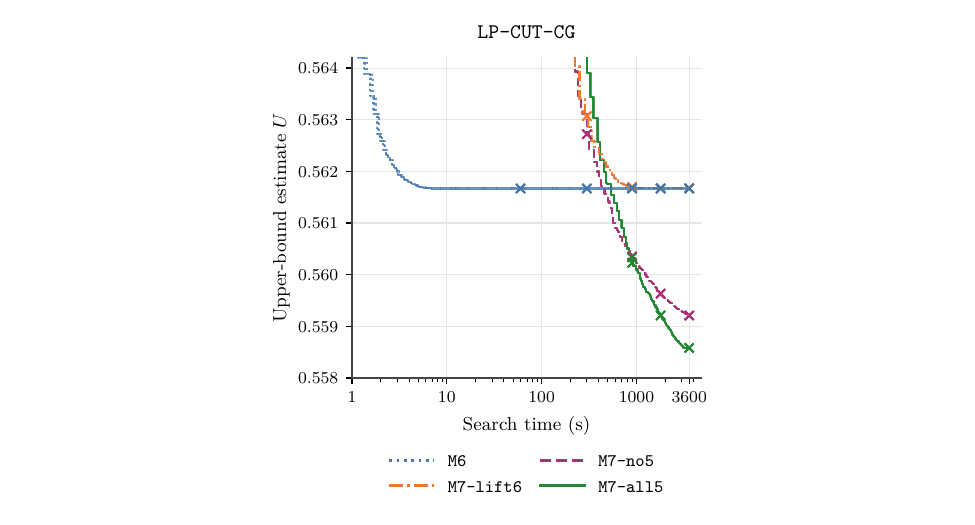}
\caption{Progress of the four SDPs with \texttt{LP-CUT-CG}.
Curves show the smallest
floating-point value of $U$ found so far in each run; crosses mark
exact certificate bounds at the indicated search budgets, excluding
verification time.  The plot focuses on the final bounds, with colors
and line styles distinguishing the four SDPs.  Results of
runs that stopped early are carried forward.}
\label{fig:remainingcomparison}
\end{figure}

\subsection{Comparison of SDP formulations}\label{sec:modeleffects}

We next fixed \texttt{LP-CUT-CG} and compared the four SDPs.
This compares the flag families and expansion sizes under the
same optimization method and one-hour budget.

\subsubsection{Expansion size and flag blocks}\label{sec:hostcomparison}

\Cref{tab:hostmodels} summarizes the one-hour results for the four
SDP formulations under \texttt{LP-CUT-CG}.
Increasing the expansion size from $m=6$ to $m=7$ while keeping
the same blocks did not improve the bound obtained.
\texttt{M7-lift6} met the stopping criteria in about $1188$ seconds,
with a verified bound approximately $1.84\cdot10^{-6}$ weaker
than that of \texttt{M6}.  The small difference is subject to
the stopping tolerances and rounding; it does not show that the
larger mathematical relaxation has a worse optimum.  It does
show that evaluating more coefficient inequalities, while
keeping these blocks fixed, brought no observed improvement
despite the increased computational cost.

In contrast, adding the one-root and three-root blocks whose
products use seven vertices improved the bound from approximately
$0.561671$ to $0.559210$.  Adding the five-root blocks improved
it further to $0.558581$.  Thus, the substantial gains in this
comparison came from allowing a broader family of PSD matrices
to contribute to the certificate.  \Cref{fig:remainingcomparison}
shows the progress of these searches.

\subsubsection{Effect of five-root blocks}\label{sec:freshcomparison}

The comparison between \texttt{M7-no5} and \texttt{M7-all5}
directly tests the benefit of optimizing jointly over all $23$
five-root types.  With the same initialization and budget, the
verified bound improved from approximately $0.559210$ to
$0.558581$, a decrease of approximately $0.00062825$.

This is evidence that the five-root blocks are useful in the
search for a stronger tetrahedron bound at the same expansion
size $m=7$.  Since neither SDP was
solved to optimality, the experiments do not establish a strict
difference between their optima.
A separate six-hour search with all $23$ five-root types reached
approximately $0.557807$; continued until the convergence tests were
met, it produced the certificate in \cref{sec:cert}.
Appendix~\ref{app:long-searches} gives the detailed results.

\paragraph{Acknowledgments.}
We thank Ingyu Baek, Jineon Baek, Bart{\l}omiej Kielak,
Taeyoung Kim, Daniel Kr\'a\v{l}, and Jaehyeon Seo for helpful 
discussions on flag algebras, hypergraph Tur\'an problems, and 
Lean formalization. 
S. Im was supported by a KIAS individual grant (AP109501) at the
Korea Institute for Advanced Study.
J. Lee was supported by Samsung STF Grant SSTF-BA2201-02 and 
the National Research Foundation of Korea (NRF) grant 
funded by the Korean Government (MSIT) (No.~NRF-2022R1C1C1010300).
G. Jeong, S. Park, and H. Yang were supported by another National
Research Foundation of Korea (NRF) grant funded by the Korean
Government (MSIT) (No.~RS-2023-00279680).

\paragraph{Data and code availability.}\label{sec:reproduction}

The accompanying repository\footnote{\url{https://github.com/taeyool/tetrahedron-turan}}
contains the search code and experimental records, rational
certificates, independent integer verifiers, and Lean sources.
The computational environment and verification tools are
described in \path{docs/reproduction.md}.
For \cref{thm:main}, \path{docs/lean-proof.md}
identifies the exact certificate and gives instructions for
reproducing the Lean proof, including the required software
versions and the recorded verification results.
The certificate can also be checked independently of the
optimization that produced it.

\paragraph{AI disclosure.}
AI coding agents assisted implementation, proof development, and
the writing of this paper.  Claude Code helped extend the Lean
formalization of flag algebras \cite{FlagCompiler} from finite
simple graphs to $3$-uniform hypergraphs.  Codex assisted the
optimization code, experimental evaluation, and formalization
of the certificate.

\bibliographystyle{unsrt}
\bibliography{references}

\newpage
\appendix
\section{Certificate data and exact verification}\label{app:certificate-details}

This appendix describes the five-root block data, derives the
integer formulas for the certificate coefficients, and explains
how the coefficient bound in \cref{prop:seven-certificate} is
verified exhaustively.

\subsection{Five-root block data}\label{app:five-block-data}

Up to isomorphism, the five-root types are the $23$
tetrahedron-free $3$-graphs on five vertices.  For each
isomorphism class, we choose a
representative with vertex set $[5]$ as follows.  List the ten
triples of $[5]$ lexicographically as $e_0,\ldots,e_9$.
For each labeling of the vertices by $[5]$, encode the edge set
$E$ by the integer $\sum_{j:e_j\in E}2^j$.
The \emph{type identifier} $\iota$ is the smallest such integer,
and the representative is the $3$-graph on $[5]$ that it encodes.
\cref{tab:fivefamilies} and the certificate data refer to each
type by its identifier.  The verification code, including the Lean
formalization, uses these identifiers to reconstruct the edge
sets of the types.
The matrix $Q_b$ is nonzero for exactly $18$ of the $23$ types in
the certificate of \cref{sec:cert}.  \cref{tab:fivefamilies} lists
these $18$ types.
For each listed type, the dimension $d_b$ is the number of its
six-vertex flags, and the factor count $k_b$ is the number of
factor vectors in the supplied decomposition of $Q_b$.

\begin{table}[t]
\centering
\caption{The $18$ five-root types with nonzero matrices in the
certificate, listed by type identifier.  The factor counts total
$349$.}
\label{tab:fivefamilies}
\small
\renewcommand{\arraystretch}{1.15}
\setlength{\tabcolsep}{16pt}
\begin{tabular}{@{}rrr@{}}
\toprule
type identifier $\iota$ & dimension $d_b$ & factor count $k_b$\\
\midrule
0 & 1024 & 21\\
1 & 896 & 30\\
3 & 800 & 35\\
7 & 728 & 12\\
11 & 720 & 23\\
15 & 652 & 23\\
30 & 644 & 34\\
31 & 594 & 37\\
63 & 545 & 2\\
77 & 640 & 1\\
87 & 584 & 6\\
94 & 578 & 28\\
116 & 637 & 14\\
117 & 580 & 17\\
119 & 532 & 31\\
222 & 526 & 10\\
237 & 527 & 17\\
254 & 483 & 8\\
\bottomrule
\end{tabular}
\end{table}

\subsection{Integer coefficient formulas}\label{sec:coeff}

We express the coefficients $c_G$ in \eqref{eq:ansatz}
in terms of the integer counts computed by the verification code.
For each count, the formulas below identify the flag-algebra term
that it represents and state its normalizing denominator, so that
the computation can be checked against the mathematical definitions.

To check the bound in integer arithmetic, we clear the common
denominator $L=5040M^2$ and compute the integer numerator
$C_7(G)=L\,c_G$ defined in \cref{prop:seven-certificate}.
We evaluate $C_7(G)$ in two parts.  First, as in
\cref{sec:evaluation}, we expand the edge density, the two
four-root blocks, and the stationarity term on six vertices,
computing their coefficients once for each $H\in\F^0_6$ and
reusing them for every seven-vertex $3$-graph $G\in\F^0_7$.
Second, we evaluate the remaining blocks, whose flag products
use seven vertices, directly on $G$.

For the first part, define $C_6(H)$ for each $H\in\F^0_6$ by the
following expansion over the common denominator $720M^2$:
\[
  \rho+\sum_{b:\,s_b=4}
    \bracket{\mathbf F_b^{\mathsf T}Q_b\mathbf F_b}_{\sigma_b}+\tau S
  =\sum_{H\in\F^0_6}\frac{C_6(H)}{720M^2}\,H.
\]
To compute the coefficients of the two four-root blocks, fix
one such block $b$ and assign its four root labels to distinct
vertices of $H$.  The two remaining vertices complete the
first and second five-vertex flags, one vertex for each flag.
The root labels can be assigned in $6\cdot5\cdot4\cdot3$ ways
and the remaining vertices in two ways, giving $720$
choices in total.  The \emph{count matrix} $N_b(H)$ has $(i,j)$
entry equal to the number of choices whose first and second flags
are isomorphic to $F_{b,i}$ and $F_{b,j}$.
Choices whose roots do not induce $\sigma_b$ are not counted.
Dividing by the $720$ choices gives $N_b(H)/720=A_b^{(6)}(H)$,
the six-vertex coefficient matrix of \cref{sec:evaluation}.

The $r$th factor vector $u_{b,r}=a_{b,r}/M$ weights each flag pair
$F_{b,i},F_{b,j}$ by
$a_{b,r,i}a_{b,r,j}/M^2$, where $a_{b,r,i}$ is the $i$th
coordinate of $a_{b,r}$.  The coefficient of its term at $H$ is
therefore
\[
  \frac{1}{720M^2}\sum_{i,j}a_{b,r,i}a_{b,r,j}N_b(H)_{ij}
  =\frac{a_{b,r}^{\mathsf T}N_b(H)a_{b,r}}{720M^2}.
\]
Summing over $r=1,\ldots,k_b$ gives the coefficient of block $b$ at $H$.

We next compute the stationarity coefficient by expanding
the two terms of $S$ in \eqref{eq:statelt}, $\rho^2$ and
$\bracket{d\cdot d}_1$, separately on six vertices.
For $\rho^2$, let $n_1(H)$ count the choices of an ordered pair
of disjoint edges and a distinguished vertex in the first edge.
The expansion of $\rho^2$ on six vertices averages over the
$20$ ordered pairs of complementary triples, of which
$n_1(H)/3$ are pairs of disjoint edges.
For $\bracket{d\cdot d}_1$, let $n_2(H)$ count the ordered pairs
of edges intersecting in exactly one vertex; their intersection
determines the root uniquely.
The expansion of $\bracket{d\cdot d}_1$ averages over
the $6\binom52\binom32=180$ choices of a root and an ordered pair
of disjoint $2$-subsets of the remaining vertices, of which
$n_2(H)$ have both $2$-subsets forming edges with the root.  The
coefficients of
$\rho^2$ and $\bracket{d\cdot d}_1$ at $H$ are therefore
$n_1(H)/60$ and $n_2(H)/180$, respectively.  Substituting them
into \eqref{eq:statelt} gives
\begin{equation}\label{eq:statcoeff}
  S\;=\;\sum_{H\in\F^0_6}\frac{3n_1(H)-n_2(H)}{60}\,H .
\end{equation}
The verification code computes the integers $3n_1(H)-n_2(H)$
directly from $H$.

The edge density $\rho$ has coefficient $|E(H)|/20$ at $H$, the
fraction of the $20$ triples of $V(H)$ that are edges.
Write $\tau=\tau_{\mathrm{num}}/M^2$ with
$\tau_{\mathrm{num}}=710104654129914$.
Adding the edge-density coefficient, the two block
coefficients, and $\tau$ times the coefficient in
\eqref{eq:statcoeff}, then multiplying by $720M^2$, gives
\[
  C_6(H)=36\,|E(H)|M^2
       +\left(\sum_{b:\,\sigma_b\in\{\sigma_4^1,\sigma_4^2\}}
          \sum_r a_{b,r}^{\mathsf T}N_b(H)a_{b,r}\right)
       +12\,\tau_{\mathrm{num}}\bigl(3n_1(H)-n_2(H)\bigr).
\]
The constants $36=720/20$ and $12=720/60$ come from clearing the
denominators $20$ and $60$.

By the expansion relations \eqref{eq:quotient-generators},
the combined coefficient at $G\in\F^0_7$ of the edge density, the
two four-root blocks, and the stationarity term is the average
$\frac17\sum_{v\in V(G)}C_6(G-v)/(720M^2)$ of their six-vertex
coefficients over the seven $3$-graphs $G-v$ obtained by deleting
one vertex $v$.  Multiplying this average
by $L=7\cdot720M^2$ gives the term
$\sum_{v\in V(G)}C_6(G-v)$ in the integer formula for $C_7(G)$.

For the second part,
we evaluate the remaining one-root, three-root, and five-root
blocks directly on $G$.
For each of these blocks $b$, we use the integer matrix
$\sum_r a_{b,r}a_{b,r}^{\mathsf T}=M^2Q_b$ in place of $Q_b$.
Combining the integer factor vectors into one matrix lets us write the sum
of their individual weighted counts as a single matrix inner product.
We denote this integer matrix by $Q_1,Q_{\bar E},Q_E$ for the types
$\sigma_1,\sigma_3^{\bar E},\sigma_3^E$, and by $Q_\sigma$
for each five-root type $\sigma$.

For the one-root and three-root blocks, define the count matrices
$N_1(G),N_{\bar E}(G),N_E(G)$ as for the four-root blocks:
assign the root labels, then split the remaining vertices into
a first set and a second set of equal size to complete the two flags.
For one root there are $7\binom63=140$ choices; for three roots
there are $7\cdot6\cdot5\binom42=1260$ choices.
Dividing each count matrix by its number of choices gives the
matrix $A_b(G)$ of \eqref{eq:coefficient-matrix} with $m=7$.
With $\langle\cdot,\cdot\rangle$ as in \cref{sec:bound}, the
coefficients of these three blocks at $G$ are therefore
$\langle Q_1,N_1(G)\rangle/(140M^2)$,
$\langle Q_{\bar E},N_{\bar E}(G)\rangle/(1260M^2)$, and
$\langle Q_E,N_E(G)\rangle/(1260M^2)$, respectively.
Multiplying by $L$ gives the constants $36=5040/140$ and
$4=5040/1260$ for these blocks.

For a five-root block $b$ of type $\sigma$, \cref{sec:evaluation}
shows that $\langle Q_b,A_b(G)\rangle=T_\sigma(G;Q_b)/2520$.
Since $T_\sigma$ is linear in the matrix, the coefficient of the
block at $G$ is $T_\sigma(G;Q_\sigma)/(2520M^2)$,
so multiplying by $L$ gives $2T_\sigma(G;Q_\sigma)$.
Adding these terms to the previously computed
$\sum_{v\in V(G)}C_6(G-v)$ gives the full integer coefficient
\begin{equation}\label{eq:column}
 \begin{aligned}
  C_7(G)\;={}&\sum_{v\in V(G)}C_6(G-v)
  +36\,\langle Q_1,N_1(G)\rangle\\
  &+4\,\langle Q_{\bar E},N_{\bar E}(G)\rangle
  +4\,\langle Q_E,N_E(G)\rangle
  +2\sum_\sigma T_\sigma(G;Q_\sigma).
 \end{aligned}
\end{equation}
The last sum runs over the $18$ five-root types in
\cref{tab:fivefamilies}.
Thus, the right-hand side of \eqref{eq:column} equals $L\,c_G$
with $c_G$ as in \eqref{eq:ansatz}, so the integer comparison
$C_7(G)\le LB$ is equivalent to $c_G\le B$.

\subsection{Exhaustive verification}\label{app:exact-verification}

We verify $C_7(G)\le LB$ for every $G\in\F^0_7$ by evaluating the
integer formula \eqref{eq:column}, with $L$ and $B$ as in
\cref{prop:seven-certificate}.
Before evaluating the coefficients, we check that each integer
factor vector $a_{b,r}$ has $d_b$ entries, indexed by the supplied flag
basis $\mathbf F_b$.  We also bound the intermediate and final
integer values to ensure that the arithmetic does not overflow.
Below, we describe how the enumeration covers every
$G\in\F^0_7$, how two implementations outside Lean cross-check the
coefficient calculations, and how the Lean formalization
establishes the same bound independently.

To cover all of $\F^0_7$, we construct seven-vertex $3$-graphs by
extending tetrahedron-free $3$-graphs from a complete six-vertex catalog.
The catalog contains one representative of each of the $2136$
isomorphism classes of six-vertex $3$-graphs, of which $964$ are
tetrahedron-free.  For each of these $964$ representatives, we add
one vertex and consider all $2^{15}$ choices of edges containing
it.  Of these extensions, $13{,}051{,}375$ are tetrahedron-free.
Every member of $\F^0_7$ occurs among them up to isomorphism,
because deleting any vertex of such a $3$-graph leaves a
tetrahedron-free six-vertex $3$-graph, which is isomorphic to one
of the catalog representatives.  Isomorphic repetitions do not affect
the maximum.

Two implementations evaluate \eqref{eq:column} on all these
extensions and cross-check each other.
One reuses the search's coefficient evaluator in exact arithmetic
mode; the other is a separate C\texttt{++} verifier that reconstructs
the five-root catalog independently.
Both implementations take the six-vertex values $C_6(H)$ from one
table, which the verification script computes from the integer factor
vectors by the formulas of Appendix~\ref{sec:coeff}; the two scans
therefore cross-check only the seven-vertex terms.
For one-root and three-root
blocks, the C\texttt{++} verifier enumerates the root-label
assignments of
Appendix~\ref{sec:coeff}; for five-root blocks, it evaluates
$2T_\sigma(G;Q_\sigma)$ using the \emph{grouped} sum
\eqref{eq:fiveorbit} over five-vertex root sets.
The two implementations agree on the maximizing $3$-graph
and give the exact maximum in \eqref{eq:maxcol}, thereby
establishing the coefficient bound in \cref{prop:seven-certificate}.
For the maximizing $3$-graph and selected others, a Python script
additionally enumerates all root-label assignments and flag pairs
for the one-, three-, and five-root blocks and confirms the
component values, including agreement with the grouped sum.

The Lean formalization of \cref{thm:main} described in
\cref{sec:cert} establishes the coefficient bound
independently of these implementations.  It proves that
the catalog contains every tetrahedron-free six-vertex $3$-graph up
to isomorphism by examining all $2^{20}$ edge sets on $[6]$, and
proves that its extensions cover every
tetrahedron-free seven-vertex $3$-graph up to isomorphism.
It checks the coefficient inequalities by exhaustive computation
within Lean and identifies the computed values with the coefficients
of the corresponding flag-algebra expressions.  For the five-root
blocks, it proves that the grouped sum and the sum
over root labelings yield the same integer coefficient
$2T_\sigma(G;Q_\sigma)$ in \eqref{eq:column}.
These finite computations use compiled Lean code, as discussed
in \cref{sec:cert}.

\section{Optimization procedure and implementation}\label{app:search-implementation}

This appendix gives the algorithmic and implementation details
for the certificate search in \cref{sec:search}.

\subsection{Optimization procedure}\label{app:optimization-procedure}

\Cref{fig:pipeline} gives the procedure used in the search leading
to our final bound, combining the cutting-plane and column-generation
steps of \cref{sec:implicit}.  We initialize $W$ with all seven-vertex
induced subgraphs occurring in Tur\'an's balanced cyclic three-part
construction from \cref{sec:intro}.
For every block $b$, we initially set $V_b=\varnothing$.

After each LP solve, we scan the
coefficients for violated constraints, as in the cutting-plane step,
and compute eigenvectors of $M_b(y)$ with negative eigenvalues,
as in the column-generation step.  The scan evaluates every
$G\in\F^0_7$ by enumerating all tetrahedron-free one-vertex
extensions of each $H\in\F^0_6$.  For each $H$, we retain up to
$8$ extensions with the largest coefficients and select as
candidates those with $c_G(\mathcal{Q},\tau)>B+10^{-7}$.
These candidates may include graphs already in $W$,
whose constraints can appear violated because of floating-point errors.
Among the selected candidates, we add only the graphs $G\notin W$
to $W$, enforcing their constraints in the next LP.
If no new constraint is added, we instead add the selected vectors
to the corresponding sets $V_b$.
This includes the case where all selected constraints are already
represented in $W$ and $U-B>10^{-7}$, so that the next LP can use new
matrix terms instead of simply repeating the same problem.
This does not relax the convergence criteria in \cref{fig:pipeline}.

To control the LP size, we periodically remove vectors with
negligible weights $\lambda_{b,v}$ from the corresponding sets $V_b$.
Before declaring convergence, we also check the LP solution's
feasibility: the errors in $\sum_Gy_G=1$ and $\sum_Gy_Gs(G)=0$,
and any violation of $y_G\ge0$, must each be at most $10^{-7}$.

\begin{figure}[t]
\centering
\setlength{\fboxsep}{6pt}
\fbox{\begin{minipage}{0.94\linewidth}
\small
Initialize $W$ with all seven-vertex
induced subgraphs occurring in Tur\'an's balanced cyclic three-part
construction and set
$V_b=\varnothing$ for every block.
Repeat:
\begin{enumerate}[label=\arabic*.,leftmargin=*,itemsep=3pt,topsep=4pt]
\item \textbf{Solve the restricted LP.} Obtain its value $B$,
the matrices $Q_b$, the multiplier $\tau$, and the dual multipliers
$y_G$ for $G\in W$.
\item \textbf{Select vectors.} Compute the smallest eigenvalues
and corresponding unit eigenvectors of every $M_b(y)$.  Select
up to $80$ vectors with the most negative eigenvalues across all
blocks, requiring each selected eigenvalue to be below $-10^{-7}$.
\item \textbf{Scan coefficients and save.}
Evaluate $c_G(\mathcal{Q},\tau)$ for every $G\in\F^0_7$ and compute
$U=\max_{G\in\F^0_7}c_G(\mathcal{Q},\tau)$.  Save $(\mathcal{Q},\tau)$ if $U$
is the smallest value found so far within the search budget.
For each $H\in\F^0_6$, retain up to $8$ tetrahedron-free one-vertex
extensions with the largest coefficients; select as candidates
those with $c_G(\mathcal{Q},\tau)>B+10^{-7}$, allowing candidates
already in $W$.
\item \textbf{Apply cutting planes or column generation.}
Among the candidates selected in step~3, add those $G\notin W$
to $W$ (cutting planes).  If no new constraint is added,
add the vectors selected in step~2
to the corresponding sets $V_b$ (column generation).
Periodically remove vectors with negligible weights $\lambda_{b,v}$
from $V_b$ to control the LP size.
\end{enumerate}
Stop when the LP feasibility checks pass, $U-B\le10^{-7}$, and every
$M_b(y)$ has smallest eigenvalue at least $-10^{-7}$, or when the
search budget is exhausted.
Return the saved pair $(\mathcal{Q},\tau)$ with the smallest $U$.
\end{minipage}}
\caption{The optimization procedure used in the final search,
combining cutting planes and column generation.}
\label{fig:pipeline}
\end{figure}

\subsection{The dual SDP and LP}\label{app:dual-formulations}

The column-generation step in \cref{sec:implicit} uses the dual
multipliers $y_G$ to form $M_b(y)$ and select new vectors.
Our implementation computes these multipliers by solving the dual
of the restricted LP \eqref{eq:restricted-certificate} directly.
To explain the matrix constraints underlying this LP, we first
give the dual of the full certificate SDP, then describe the LP
solved at each iteration and how its solution supplies both $y$
and the certificate variables.

The certificate SDP \eqref{eq:sdp} has the following dual SDP,
written in terms of nonnegative weights $y=(y_G)_{G\in\F^0_7}$:
\begin{equation}\label{eq:dual-sdp}
\begin{array}{ll}
\text{maximize} & \displaystyle\sum_G y_G\,|E(G)|/35\\[1mm]
\text{subject to} & y_G\ge0,\quad\displaystyle\sum_Gy_G=1,\\[1mm]
 & \displaystyle\sum_Gy_Gs(G)=0,\qquad M_b(y)\PSD\quad(\text{every }b),
\end{array}
\end{equation}
where $M_b(y)=\sum_Gy_GA_b(G)$ is the weighted average of the
coefficient matrices for block $b$.
The PSD constraints apply to these averages; individual coefficient
matrices $A_b(G)$ need not be PSD.  Any feasible $y$ gives a lower
bound on the optimum of \eqref{eq:sdp} by weak duality: for every
feasible certificate $(B,\mathcal{Q},\tau)$,
\[
 \sum_G y_G\frac{|E(G)|}{35}
 \le \sum_G y_G\frac{|E(G)|}{35}+\sum_b\langle Q_b,M_b(y)\rangle
 =\sum_G y_G c_G(\mathcal{Q},\tau)
 \le B.
\]

For the fixed sets $W$ and $V_b$ at one iteration, the dual of the
restricted certificate LP \eqref{eq:restricted-certificate}
maximizes the same density objective with $y$ supported on $W$.
It replaces each PSD condition $M_b(y)\PSD$ by linear inequalities
for the selected vectors $v\in V_b$, giving the constraints
\begin{equation}\label{eq:psdcut}
 y_G\ge0,\qquad \sum_{G\in W}y_G=1,\qquad
 \sum_{G\in W}y_Gs(G)=0,\qquad
 v^{\mathsf T}M_b(y)v\ge0\quad(v\in V_b,\ \text{every }b).
\end{equation}
We solve this dual LP with HiGHS, a linear programming solver,
and use its variable values as $y$.
Since these inequalities enforce nonnegativity only along the selected
vectors, $M_b(y)$ can still have a negative eigenvalue.  A corresponding
eigenvector supplies a new direction for the column-generation step.

We recover the certificate variables from the constraint multipliers
returned by the same LP solve.  After adjusting solver sign conventions,
the multipliers of $v^{\mathsf T}M_b(y)v\ge0$ give the nonnegative
weights $\lambda_{b,v}$, which determine $Q_b$ through
\eqref{eq:matrix-atoms}.  The multiplier of $\sum_Gy_G=1$ gives $B$,
and that of $\sum_Gy_Gs(G)=0$ gives $\tau$, with the sign convention
of $c_G(\mathcal{Q},\tau)$.
When the implemented dual LP is feasible and bounded,
linear-programming duality ensures that its optimal constraint
multipliers give an optimal solution to the certificate LP
\eqref{eq:restricted-certificate}, with the same objective value.
Thus, the same LP solve provides $(B,\mathcal{Q},\tau)$ for the
coefficient scan and $y$ for the eigenvalue calculations in
\cref{fig:pipeline}.

\subsection{Constructing the exact certificate}\label{app:exact-construction}

Recovering exact certificates from floating-point SDP solutions
is standard in computational flag-algebra proofs
\cite{BaberTalbot2011,FalgasRavryVaughan2013}.
After the continued search described in Appendix~\ref{app:long-searches}
passed all convergence tests, we converted its final pair
$(\mathcal{Q},\tau)$ into the exact certificate of \cref{sec:cert}
as follows.

To keep the certificate compact, we first used a singular value
decomposition (SVD) to reduce the number of factor vectors before rounding.
Each term $\lambda_{b,v}vv^{\mathsf T}$ in
\eqref{eq:matrix-atoms} has factor vector $\sqrt{\lambda_{b,v}}\,v$.
For each block, let $R_b$ be the matrix whose rows are the
transposes of these factor vectors.  Write
$Q_b^{\mathrm{search}}=R_b^{\mathsf T}R_b$ for the matrix represented
by the saved factor vectors.
The SVD of $R_b$, with singular values $s_{b,r}$ and corresponding
unit right singular vectors $z_{b,r}$, gives
\[
 Q_b^{\mathrm{search}}
 =\sum_r(s_{b,r}z_{b,r})(s_{b,r}z_{b,r})^{\mathsf T}.
\]
Thus, the vectors $s_{b,r}z_{b,r}$ form an orthogonal list of
factor vectors representing the same matrix.  We discarded factor
vectors for which $s_{b,r}$ was
at most $10^{-7}$ times the largest singular value in that block.

Next, we rounded each coordinate of every retained factor vector to the
nearest multiple of $1/M$, where $M=8\cdot10^6$.  Writing these
rounded vectors as $a_{b,r}/M$, with $a_{b,r}\in\mathbb Z^{d_b}$,
we formed
\[
 Q_b^{\mathrm{exact}}=\frac1{M^2}\sum_r a_{b,r}a_{b,r}^{\mathsf T}.
\]
These are the rational matrices specified in \cref{sec:cert};
their factor representations ensure positive semidefiniteness.
We also rounded the stationarity multiplier to the nearest
multiple of $M^{-2}$.
The certificates of the experiments in \cref{sec:experiments} and
Appendix~\ref{app:experiments} use $M=2\cdot10^6$.  For the final
certificate we used $M=8\cdot10^6$, a value within the overflow
bounds of the integer verifiers; it reduced the loss from rounding,
the difference between $B$ and the floating-point coefficient maximum
$U$ of the converged pair, from approximately $5.7\times10^{-7}$ to
$8.7\times10^{-8}$.

Discarding factor vectors and rounding can change the coefficient maximum,
so we recomputed the bound using the rational matrices and multiplier.
We evaluated $c_G$ for every $G\in\F^0_7$ in exact arithmetic and
obtained the maximum $B$ stated in \cref{prop:seven-certificate}.
The resulting inequalities $c_G\le B$, together with the positive
semidefiniteness of the rational matrices, complete the construction
of the exact certificate.

\FloatBarrier
\section{Experimental settings and additional results}\label{app:experiments}

This appendix gives the settings, verification procedures, and
additional measurements for the experiments in \cref{sec:experiments}.

\subsection{Experimental settings}\label{app:experiment-details}

\paragraph{Block families.}
The six blocks with $2\ell-s=6$ in \cref{tab:fullfamilies}
have dimensions $2,11,64,56,50,45$.
The experimental implementations also included three blocks with
$2\ell-s=5$: one with $(s,\ell)=(1,3)$ and two with $(s,\ell)=(3,4)$,
of dimensions $2,8,7$, respectively.
Thus, \texttt{M6} and \texttt{M7-lift6} each used $9$ blocks,
\texttt{M7-no5} used $12$, and \texttt{M7-all5} used $35$,
as reported in \cref{tab:hostmodels}.
In \texttt{M7-no5} and \texttt{M7-all5}, expanding flags expresses
the quadratic forms of the three smaller blocks within the larger
blocks of the same types.  We retained them to make these smaller
representations directly available during optimization; their matrices
are zero in the final certificate of \cref{sec:cert}.

\paragraph{Initialization.}
For methods using cutting planes, we initialized $W$ as in
Appendix~\ref{app:optimization-procedure}, using the SDP's expansion size $m$.
Methods using column generation likewise started with
$V_b=\varnothing$ for every block.
Each run started afresh, without matrices or vectors inherited
from earlier searches.

\paragraph{Stopping criteria.}
After each solve, the implementations evaluated the current
candidate over all $G\in\F^0_m$ to obtain $U$ and computed the
smallest eigenvalue of every $M_b(y)$.  Here $y$ is the returned
weight vector in the dual formulations of Appendix~\ref{app:dual-formulations},
supported on $W$; for methods without cutting planes,
$W=\F^0_m$.  All four optimization methods used the following
tests for convergence of the overall search:
\[
  U-\sum_{G\in W}y_G\frac{|E(G)|}{\binom m3}\le10^{-7},
  \qquad
  \lambda_{\min}(M_b(y))\ge-10^{-7}
  \quad\text{for every block }b.
\]
The first test compares the global coefficient maximum with the
objective value returned by the solver; for an exactly solved
restricted LP, this objective value equals $B$.
Successful termination also required normalization, stationarity,
and nonnegativity to satisfy
\[
  \left|\sum_{G\in W}y_G-1\right|\le10^{-7},
  \qquad
  \left|\sum_{G\in W}y_Gs(G)\right|\le10^{-7},
  \qquad
  y_G\ge-10^{-7}\quad(G\in W).
\]

\begin{table}[t]
\centering
\caption{Computational costs and final counts of retained coefficient
inequalities and vectors: medians of three one-hour runs per
SDP--method pair.  Times are summed over all executions of each
task within a run.
Solve time includes problem assembly for \texttt{SDP-FULL}
and \texttt{SDP-CUT}.  Eigenvalue time covers calculations for
$M_b(y)$ outside the solver; coefficient evaluation time covers
the evaluation of $c_G$ for every $G\in\F^0_7$.  The final
values $|W|$ and $\sum_b|V_b|$ count retained coefficient
inequalities and vectors (equivalently, variables
$\lambda_{b,v}$), respectively.  A dash indicates that column
generation is not used.}
\label{tab:computationalcosts}
\small
\begin{tabular}{@{}llrrrrr@{}}
\toprule
SDP & method & \shortstack[r]{Solve\\time (s)}
& \shortstack[r]{Eigenvalue\\time (s)}
& \shortstack[r]{Coefficient\\evaluation time (s)}
& $|W|$ & $\sum_b|V_b|$\\
\midrule
\texttt{M7-no5} & \texttt{SDP-FULL} & $3514.14$ & $0.00$ & $9.33$ & $1{,}295{,}600$ & --\\
 & \texttt{LP-CG} & $1208.31$ & $0.20$ & $230.89$ & $1{,}295{,}600$ & $229$\\
 & \texttt{SDP-CUT} & $3477.89$ & $0.12$ & $120.59$ & $6688$ & --\\
 & \texttt{LP-CUT-CG} & $725.10$ & $2.61$ & $2787.83$ & $2859$ & $2328$\\
\addlinespace
\texttt{M7-all5} & \texttt{SDP-FULL} & $3372.62$ & $0.55$ & $11.27$ & $1{,}295{,}600$ & --\\
 & \texttt{LP-CG} & $1578.13$ & $11.74$ & $365.33$ & $1{,}295{,}600$ & $383$\\
 & \texttt{SDP-CUT} & $3172.24$ & $3.50$ & $79.14$ & $11758$ & --\\
 & \texttt{LP-CUT-CG} & $384.92$ & $104.35$ & $2930.59$ & $5078$ & $2792$\\
\bottomrule
\end{tabular}
\end{table}

\paragraph{Execution environment.}
All trials ran sequentially on Windows~11 with an Intel Core
i7-14700K, $63.72$~GiB RAM, and a $40$~GiB process-memory cap.
We used HiGHS~1.15.1 as the LP solver and SCS~3.2.8 as the SDP solver.
Solver and linear-algebra computations used one thread, and
complete coefficient scans used two.
For each run, memory was measured by summing the resident set
sizes (RSS) of the search process and its child processes.
The maximum observed total is the peak memory usage reported in
\cref{sec:experiments}.

The environment used Python~3.12.4, NumPy~2.0.2, SciPy~1.13.1,
CVXPY~1.6.5, and OpenBLAS~0.3.27; the C++ routines were compiled
with MinGW GCC~13.2.0.
For the seven-vertex \texttt{LP-CG} and \texttt{SDP-FULL} runs,
we precomputed the deletion probabilities $p(H,G)$ from
\cref{sec:evaluation} and the entries of $A_b(G)$ for blocks
whose flag products use seven vertices, for all $G\in\F^0_7$.
These data are independent of $(\mathcal Q,\tau)$ and were stored
as sparse arrays shared by runs of the same SDP.
They occupied $17.93$~GB on disk for \texttt{M7-no5}
and $20.42$~GB for \texttt{M7-all5}.  Constructing them took
$110.20$ and $132.33$ seconds, respectively; these setup costs
are excluded from the search timings.
The research archive retains the source versions, settings, flag
bases, per-run measurements, exact fractions, and verification
records; \nameref{sec:reproduction} gives the reproduction instructions.

\paragraph{Solver settings.}
For each individual solve, HiGHS used primal and dual feasibility
tolerances of $10^{-9}$, and SCS used absolute, relative, and
infeasibility tolerances of $10^{-8}$.
These internal solver settings are distinct from the $10^{-7}$
tests used above to decide whether the overall search had converged.
For \texttt{SDP-FULL} on six vertices, and for all \texttt{SDP-CUT}
runs, we used the SDP solver SCS through CVXPY, a Python package
for formulating convex optimization problems \cite{DiamondBoyd2016}.
The seven-vertex \texttt{SDP-FULL} runs used a validated SCS interface
that avoided copies of the large coefficient arrays.  The main \texttt{M6}
comparisons used direct linear-system solves with a limit of
$10^6$ iterations; the longer \texttt{M6} study and its separate
one-hour reference increased this limit to $20$ million.
Full seven-vertex SDP runs used indirect linear-system solves
with the same $20$-million limit and splitting parameter
$\rho_x=100$, fixed by preliminary trials.  Generated SDP runs
used $\rho_x=10^{-6}$ and a $300$-second limit per restricted
solve.  Settings were fixed within each group of three
repetitions.

\paragraph{Memory management.}
The \texttt{LP-CG}
implementations retained all coefficient inequalities and removed
vectors with negligible weights to limit memory.  Removal was
triggered when an update would exceed $500$ million entries in
the LP constraint matrix for \texttt{M7-all5}.
For \texttt{M7-no5}, the threshold was $300$ million entries.
Vectors with weights above
$10^{-10}$ were retained, and the cost of rebuilding the LP in
HiGHS after removal was charged to the search.  These memory
policies differ from those of the smaller LPs in
\texttt{LP-CUT-CG} and are part of the comparison in
\cref{sec:algorithmcomparison}.

\paragraph{Exact verification.}
Only candidate pairs $(\mathcal Q,\tau)$ whose complete coefficient
evaluation finished within the search budget were eligible for
selection.  Among these, we selected the pair with the smallest $U$.
We then applied the conversion procedure of
Appendix~\ref{app:exact-construction}, using the same denominators
and SVD tolerance for all experimental results.
For the SDP methods, negative eigenvalues
of exported certificate matrices were first replaced by zero;
the resulting factor vectors were then rounded and the largest
coefficient recomputed exactly over the full set of tetrahedron-free
$3$-graphs indexing the corresponding SDP.  This makes the reported upper
bounds valid even when the SDP solver had not reached a feasible
solution to the dual SDP in
Appendix~\ref{app:dual-formulations}.
Exact conversion and verification were performed after each search
ended and were excluded from the search budget.

\subsection{Computational costs}\label{app:experiment-results}

\Cref{tab:computationalcosts} compares the main computational costs
and the final counts of retained coefficient inequalities $|W|$
and vectors $\sum_b|V_b|$ for the four methods on the two
seven-vertex SDPs.  \texttt{SDP-FULL} and \texttt{SDP-CUT} spent
most of their running time in solver calls.  \texttt{LP-CUT-CG} retained only $2859$ and $5078$ coefficient
inequalities on \texttt{M7-no5} and \texttt{M7-all5}, respectively,
compared with $1{,}295{,}600$ for methods without cutting planes.
The dominant cost of \texttt{LP-CUT-CG} was instead evaluating
$c_G(\mathcal{Q},\tau)$ for every $G\in\F^0_7$ after each LP solve,
which computes $U$ and identifies violated constraints.
This step accounted for approximately $77\%$ of recorded running
time on \texttt{M7-no5} and $81\%$ on \texttt{M7-all5}, even with
the reductions of \cref{sec:evaluation}.  Exact conversion and verification took
approximately $74$--$113$ seconds per final certificate in these
one-hour comparisons.

\subsection{Longer searches}\label{app:long-searches}

To examine how much further the bounds could improve, we
increased the search budget to six hours and ran fresh searches
using \texttt{LP-CUT-CG} on \texttt{M7-no5} and \texttt{M7-all5}.
Both improved on the bounds from the one-hour comparisons in
\cref{sec:algorithmcomparison}, as shown in \cref{tab:longconvergence}.
\texttt{M7-no5} reached approximately $0.558917$, while
\texttt{M7-all5} reached approximately $0.557807$.
Both runs were stopped by their time limits before the convergence
tests were met.  Continuing the \texttt{M7-all5} search to
convergence, as described below, produced the bound in \cref{thm:main}.

\begin{table}[t]
\centering
\caption{Comparison of the one-hour bounds from
\cref{tab:m6methods,tab:sevenmethods} with bounds from fresh
six-hour searches.  All bounds were verified exactly and are
rounded to decimals.  Total factor counts are $\sum_b k_b$ over
all blocks of the final six-hour certificates.
All six-hour runs were time-limited.}
\label{tab:longconvergence}
\small
\begin{tabular}{@{}llrrr@{}}
\toprule
SDP & method & one-hour bound & six-hour bound & total factor count\\
\midrule
\texttt{M7-no5} & \texttt{LP-CUT-CG} & $0.559209644128$ & $0.558917128212$ & $437$\\
\texttt{M7-all5} & \texttt{LP-CUT-CG} & $0.558581395366$ & $0.557807255160$ & $794$\\
\texttt{M6} & \texttt{SDP-FULL} & $0.561665585236$ & $0.561653217956$ & $80$\\
\bottomrule
\end{tabular}
\end{table}

We also ran \texttt{SDP-FULL} on \texttt{M6} for six hours.
It reached approximately $0.561653$, a much smaller improvement
over its one-hour bound in
\cref{tab:m6methods}.
\Cref{fig:longconvergencemeasured} shows the progress of the longer
searches.  All three exhausted their time budgets without
establishing the SDP optimum.

Two settings also differed from the one-hour comparisons.
The six-hour \texttt{M7-all5} run used the procedure of
Appendix~\ref{app:optimization-procedure}, which allows column
generation whenever no new constraint is added.  The one-hour
runs performed this step only when $U-B\le10^{-7}$.
For \texttt{M6}, the SDP solver's iteration limit was increased
from one million to $20$ million.

\paragraph{Continuation to convergence.}
To obtain a certificate satisfying the convergence tests of
Appendix~\ref{app:experiment-details}, we continued the six-hour
\texttt{M7-all5} search from its final state ($6339$ retained
coefficient inequalities and $2322$ vectors) without a time limit,
using the procedure of Appendix~\ref{app:optimization-procedure}.
Two settings differed from the six-hour run.
First, the three blocks with $2\ell_b-s_b=5$, whose matrices were
zero in the six-hour certificate, were excluded.
Second, the threshold below which HiGHS discards small matrix
coefficients was lowered from its default of $10^{-9}$ to $10^{-12}$.

The continued search met all convergence tests of
Appendix~\ref{app:experiment-details} after $493$ further iterations
and $7.7$ hours of search time; at termination, $U-B=5.6\times10^{-10}$
and every $M_b(y)$ had smallest eigenvalue at least
$-9.7\times10^{-8}$.  Converting the final pair
$(\mathcal Q,\tau)$ as in Appendix~\ref{app:exact-construction},
with $M=8\cdot10^6$, gave the certificate of \cref{sec:cert}.

\begin{figure}[t]
\centering
\includegraphics[width=\textwidth,trim=0 6bp 0 0,clip]{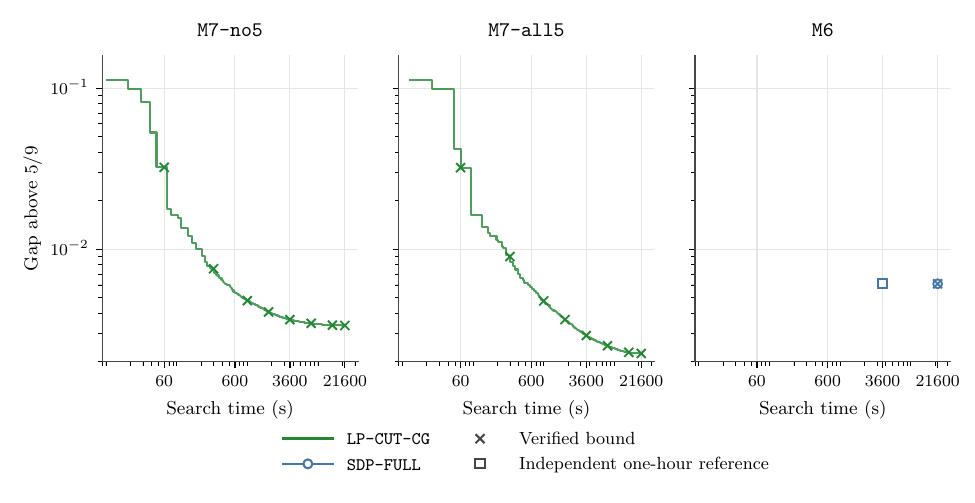}
\caption{Progress of the fresh six-hour searches on common logarithmic
axes.  Curves show the
smallest floating-point value of $U$ found so far; crosses mark
exact certificate bounds at the indicated search times, excluding
verification time.  Open circles mark the first available
\texttt{SDP-FULL} candidate.  The \texttt{M6} square marks an
additional one-hour reference run with a $20$-million solver
iteration limit, separate from the one-hour results in
\cref{tab:longconvergence}.  The continuation of the
\texttt{M7-all5} run to convergence is not shown.}
\label{fig:longconvergencemeasured}
\end{figure}

\end{document}